\newtheorem{thm}{Theorem}
\newtheorem{lem}[thm]{Lemma}
\newtheorem{cor}[thm]{Corollary}
\newtheorem{definition}[thm]{Definition}
\begin{document}

\title{The 1/3-2/3 Conjecture for ordered sets whose cover graph is a forest}
\author{Imed Zaguia  \\
\small Department of Mathematics \& Computer Science\\ \small  Royal Military College of Canada\\
\small P.O.Box 17000, Station Forces, K7K 7B4 Kingston, Ontario, CANADA \\
{\small E-mail: imed.zaguia@rmc.ca} }

\date{\today}
\maketitle

\begin{abstract} A balanced pair in an ordered set
$P=(V,\leq)$ is a pair $(x,y)$ of elements of $V$ such that the
proportion of linear extensions of $P$ that put $x$ before $y$ is
in the real interval $[1/3, 2/3]$. We define the notion of a good pair and claim any ordered set that has a good pair will satisfy the conjecture and furthermore every ordered set which is not totally ordered and has a forest as its cover graph has a good pair.

\noindent {\bf Keywords:} (partially) ordered set; linear extension; balanced pair; cover graph; tree; 1/3-2/3 Conjecture.
\newline {\bf AMS subject classification (2000): 06A05, 06A06, 06A07}
\end{abstract}

\section{Introduction}

Throughout, $P =(V, \leq)$ denotes a \emph{finite ordered set}, that
is, a \emph{finite} set $V$ and a binary relation $\leq$ on $V$ which is
reflexive, antisymmetric and transitive. A \emph{linear extension} of $P =(V, \leq)$ is a total ordering $\preceq$ of $V$ which extends $\leq$, i.e. such that for every $x,y\in V$, $x\preceq y$ whenever $x \leq y$.

For a pair $(x,y)$ of elements of $V$ we denote by $\mathbb{P}(x\prec y)$ the proportion of linear extensions of $P$
that put $x$ before $y$. Call a pair $(x,y)$ of elements of $V$ a \emph{balanced pair} in $P=(V,\leq)$
if $1/3 \leq \mathbb{P}(x\prec y) \leq 2/3$. The 1/3-2/3 Conjecture states that every finite ordered set which is not totally ordered has a balanced pair. If true, the example (a) depicted in Figure \ref{alpha} would show that the result is best
possible. The 1/3-2/3 Conjecture first appeared in a paper of Kislitsyn \cite{ki}. It was also formulated independently by Fredman in about 1975 and again by Linial \cite{li}.

The 1/3-2/3 Conjecture is known to be true for ordered sets with a nontrivial automorphism \cite{ghp},
for ordered sets of width two \cite{li}, for semiorders \cite{gb1}, for bipartite ordered sets
\cite{tgf}, for 5-thin posets \cite{gb2}, and for 6-thin posets \cite{pm}. See \cite{gb} for a
survey.

Recently, the author proved that the 1/3-2/3 Conjecture is true for ordered sets having no $N$ in their Hasse diagram \cite{za}. Using similar ideas we prove that the 1/3-2/3 Conjecture is true for ordered sets whose cover graph is a forest.\\

Let $P=(V,\leq)$ be an ordered set. For $x,y\in V$ we say that $y$ is an \emph{upper cover} of $x$ or that $x$ is
\emph{a lower cover} of $y$ if $x<y$ and there is no element $z\in V$ such that $x<z<y$. Also, we say that
$x$ and $y$ are \emph{comparable} if $x\leq y$ or $y\leq x$ and we set $x\sim y$; otherwise we say that $x$ and $y$ are
\emph{incomparable} and we set $x\nsim y$. We denote by $inc(P)$ the set of incomparable pairs of $P$, that is,
$inc(P):=\{(x,y): x \nsim y\}$. A \emph{chain} is a totally ordered set. For an element $u\in V$, set $D(u):=\{v\in V : v< u\}$ and $U(u):=\{v\in V : u< v\}$. The \emph{dual} of $P$, denoted by $P^*$, is the order defined on
$V$ as follows: $x\leq y$ in $P^*$ if and only if $y\leq x$ in $P$.



\begin{definition}\label{goodpair}
Let $P$ be an ordered set. A pair $(a,b)$ of elements of $V$ is \emph{good} if the following two conditions hold simultaneously in $P$ or in its dual.
\begin{enumerate}[(i)]
\item $D(a)\subseteq D(b)$ and $U(b)\setminus U(a)$ is a chain (possibly empty); \underline{and}
\item $\mathbb{P}(a\prec b)\leq \frac{1}{2}$.
\end{enumerate}
\end{definition}

We notice at once that if $(a,b)$ is a good pair, then $a$ and $b$ are necessarily incomparable. 

The relation between good pairs and balanced pairs is stated in the following theorem.

\begin{thm}\label{goodisbalanced}A finite ordered set that has a good pair has a balanced pair.
\end{thm}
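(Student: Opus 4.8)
The plan is to produce an explicit balanced pair out of the good pair $(a,b)$. By the duality built into the definition of a good pair, I may assume that conditions (i)--(ii) hold in $P$ itself, and I write $p_0 := \mathbb{P}(a\prec b)\le \tfrac12$. If $p_0\ge \tfrac13$ then $(a,b)$ is already balanced and there is nothing to do, so I assume $p_0<\tfrac13$. The whole argument rests on one classical estimate, which I isolate first: if $u\nsim v$, $D(u)\subseteq D(v)$ and $U(v)\subseteq U(u)$, then $\mathbb{P}(u\prec v)\ge \tfrac12$. This ``nested pair'' lemma has a one-line proof by a swap involution: in any linear extension with $v\prec u$, transposing the positions of $u$ and $v$ (fixing everything else) gives a valid linear extension with $u\prec v$, since each element $z$ strictly between them satisfies $z\not< u$ (otherwise $z\in D(u)\subseteq D(v)$, contradicting $v\prec z$) and $z\not> v$ (otherwise $z\in U(v)\subseteq U(u)$, contradicting $z\prec u$); this injects $\{v\prec u\}$ into $\{u\prec v\}$.

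Next I exploit the chain. Write $C:=U(b)\setminus U(a)=\{c_1<c_2<\cdots<c_t\}$. Each $c_i$ is incomparable to $a$ (if $c_i<a$ then $c_i\in D(a)\subseteq D(b)$, contradicting $b<c_i$; and $c_i>a$ is impossible as $c_i\notin U(a)$), so every $(a,c_i)$ is a legitimate candidate balanced pair. Set $p_i:=\mathbb{P}(a\prec c_i)$. Since $b<c_1<\cdots<c_t$ in every linear extension, $\{a\prec b\}\subseteq\{a\prec c_1\}\subseteq\cdots\subseteq\{a\prec c_t\}$, hence $p_0\le p_1\le\cdots\le p_t$. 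Moreover, if $C$ were empty then $U(b)\subseteq U(a)$, and the nested lemma applied to $(a,b)$ would force $p_0\ge\tfrac12$, contradicting $p_0<\tfrac13$; so $t\ge 1$.

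The two quantitative inputs both come from applying the nested lemma inside the forced poset $Q_i:=P\cup\{a<c_i\}$, whose linear extensions are exactly those of $P$ with $a\prec c_i$. First, for $(a,c_t)$ in $P$ itself: the chain condition forces $U(c_t)\subseteq U(a)$ (any $w>c_t$ has $w>b$, so $w\in U(b)$; if $w\notin U(a)$ then $w\in C$ with $w>\max C$, impossible), while $D(a)\subseteq D(b)\subseteq D(c_t)$, so the lemma yields $p_t\ge\tfrac12$. Second, I claim $p_i\le 2\,p_{i-1}$ for $1\le i\le t$ (with $c_0:=b$): in $Q_i$ the pair $(a,c_{i-1})$ is nested, because $D_{Q_i}(a)=D(a)\subseteq D(c_{i-1})$ and $U_{Q_i}(c_{i-1})=U(c_{i-1})\subseteq U_{Q_i}(a)$ --- here the chain hypothesis is precisely what guarantees that every element above $c_{i-1}$ is comparable to $c_i$, hence either already lies in $U(a)$ or lies in $\{c_i\}\cup U(c_i)\subseteq U_{Q_i}(a)$. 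The lemma then gives $\mathbb{P}(a\prec c_{i-1}\mid a\prec c_i)\ge\tfrac12$, and since $\{a\prec c_{i-1}\}\subseteq\{a\prec c_i\}$ this says exactly $p_{i-1}\ge\tfrac12 p_i$.

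Finally I run a crossing argument on the monotone sequence with $p_0<\tfrac13$ and $p_t\ge\tfrac12$. Let $i^{\ast}$ be the least index with $p_{i^{\ast}}\ge\tfrac13$; it exists and $i^{\ast}\ge 1$ because $p_0<\tfrac13\le p_t$. Then $p_{i^{\ast}-1}<\tfrac13$, so $p_{i^{\ast}}\le 2\,p_{i^{\ast}-1}<\tfrac23$, giving $p_{i^{\ast}}\in[\tfrac13,\tfrac23)$; thus $(a,c_{i^{\ast}})$ is a balanced pair. The role of the factor-$2$ bound is exactly that the sequence cannot leap across the interval $[\tfrac13,\tfrac23]$. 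I expect the main obstacle to be the verification that $(a,c_{i-1})$ stays a nested pair after adjoining $a<c_i$ --- that is, controlling the up-sets in $Q_i$ --- and this is the single point at which the hypothesis that $U(b)\setminus U(a)$ is a \emph{chain}, rather than an arbitrary set, is indispensable.
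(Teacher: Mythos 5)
Your proof is correct, and while it shares the paper's overall architecture --- walk up the chain $U(b)\setminus U(a)$ and show that the nondecreasing sequence $\mathbb{P}(a\prec b),\mathbb{P}(a\prec c_1),\ldots$ starts below $\tfrac13$, ends at or above $\tfrac12$, and cannot leap over the balanced window --- the key inequality controlling the jumps is genuinely different. The paper sets $q_1=\mathbb{P}(a\prec b)$ and $q_j=\mathbb{P}(b_{j-1}\prec a\prec b_j)$ and proves the increments are \emph{non-increasing}, $q_{n+1}\le\cdots\le q_1<\tfrac13$, via an injection that swaps $a$ with $b_j$ to map the event $\{b_j\prec a\prec b_{j+1}\}$ into $\{b_{j-1}\prec a\prec b_j\}$; since no increment exceeds $q_1<\tfrac13$, the partial sums cannot skip from below $\tfrac13$ to above $\tfrac23$. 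You instead prove the weaker doubling bound $p_i\le 2p_{i-1}$ by observing that $(a,c_{i-1})$ becomes a critical (nested) pair in the forced extension $Q_i=P\vee(a,c_i)$, whose linear extensions are exactly those of $P$ with $a\prec c_i$, and applying the swap lemma there; this conditioning mechanism is essentially the one the paper deploys later in Theorem \ref{imed2} rather than in its proof of Theorem \ref{goodisbalanced}. Your verification that $(a,c_{i-1})$ is nested in $Q_i$ --- in particular that $U_{Q_i}(c_{i-1})=U(c_{i-1})\subseteq U(a)\cup\{c_i\}\cup U(c_i)\subseteq U_{Q_i}(a)$, which is exactly where the chain hypothesis enters --- is the one delicate point, and it is carried out correctly, as are the supporting facts that each $c_i$ is incomparable to $a$ and that $(a,c_t)$ is itself critical, giving $p_t\ge\tfrac12$. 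The paper's monotonicity of increments is strictly stronger (it implies $p_i\le p_{i-1}+p_0\le 2p_{i-1}$), but both bounds suffice for the crossing argument; your version has the merit of being phrased directly in terms of conditional probabilities in one-point extensions and of exhibiting the balanced pair $(a,c_{i^{\ast}})$ explicitly rather than arguing by contradiction.
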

We prove Theorem \ref{goodisbalanced} in Section 2.

A good pair is not necessarily a balanced pair (for an example consider the pair $(y,t)$ in example (c) Figure \ref{alpha}). The following theorem gives instances of good pairs that are balanced pairs. Before stating our next result we first need a definition. Let
$P=(V,\leq)$ be an ordered set. A subset $A$ of $V$ is called {\it autonomous} (or an interval or a module or a clan) in $P$ if for all $v\not\in A$
and for all $a,a^{\prime} \in A$

\begin{equation}
(v<a\Rightarrow v < a^{\prime})\;\mathrm{and}\;(a<v\Rightarrow
a^{\prime} < v).
\end{equation}



\begin{thm}\label{goodeqbalanced} Let $P=(V,\leq)$ be an ordered set and let $(x,y) \in inc(P)$. Suppose that
one of the following propositions holds for $P$ or for its dual.
\begin{enumerate}[(i)]
\item There exists $z\in V$ such that $x<z$, $x\nsim y \nsim z$ and $\{x,y\}$ is autonomous in $P\setminus \{z\}$ (see example (a) Figure~\ref{alpha}).
\item There are $z,t\in V$ such that $x<z$, $y<t$, $y\nsim z$, $x\nsim t$ and $\{x,y\}$ is
autonomous for $P\setminus \{z,t\}$ (see example (b) Figure~\ref{alpha}).
\item There are $z,t\in V$ such that $t<x<z$, $y$ is incomparable to
both $t$ and $z$, and $\{x,y\}$ is autonomous for $P\setminus
\{z,t\}$ (see example (c) Figure~\ref{alpha}).
\end{enumerate}
Then $(x,y)$ is balanced in $P$.
\end{thm}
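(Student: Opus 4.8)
The plan is to classify the linear extensions of $P$ by the order (the \emph{pattern}) they induce on the few distinguished elements, and to pass between patterns by transposing the positions of two distinguished elements inside an extension. For a word $w$ in these elements write $N(w)$ for the number of linear extensions inducing the pattern $w$. The only tool needed is the elementary remark that, if $a$ precedes $b$ in a linear extension $L$, then interchanging the positions of $a$ and $b$ (fixing every other element) again produces a linear extension precisely when $a\nsim b$ and every element lying strictly between $a$ and $b$ in $L$ is incomparable to both of them; such a transposition is automatically injective, being the restriction of an involution on words. Replacing $P$ by its dual if necessary, I assume the displayed (non-dual) alternative holds, and I note $x\nsim y$. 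The autonomy hypotheses yield the structural facts that make the transpositions legal: in (i), $D(x)=D(y)=:D$ gives $D\subseteq D(z)$, so $D$ precedes $z$, while $z<u$ forces $x<u$ and hence $y<u$, so $U(z)\subseteq U(y)$; the analogous one‑sided containments ($D(t)\subseteq D$, and $u>x\Rightarrow u>y$) hold in (ii) and (iii).

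In (i) every extension has $x\prec z$, so the patterns are $A=xzy$, $B=xyz$, $C=yxz$ and $\mathbb{P}(x\prec y)=\tfrac{N(A)+N(B)}{N(A)+N(B)+N(C)}$. Since $x$ and $y$ both precede $z$, transposing them turns $B$ into $C$ (any element between $x$ and $y$ is incomparable to both by autonomy, and $z$ stays to the right), a bijection giving $N(B)=N(C)$. Transposing $y$ and $z$ in a member of $A$ is legal: an element $m$ with $z\prec m\prec y$ is not in $D(z)$ (it follows $z$) nor in $U(z)\subseteq U(y)$ (it would follow $y$), so $m\nsim z$, and $m\notin D$ (it would precede $z$) and $m\notin U(y)$ (it would follow $y$), so $m\nsim y$; this injects $A$ into $B$, whence $N(A)\le N(B)$. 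Writing $a=N(A)\le N(B)=N(C)=:c$,
\[
\mathbb{P}(x\prec y)=\frac{a+c}{a+2c}\in\Bigl[\tfrac12,\tfrac23\Bigr]\subseteq\Bigl[\tfrac13,\tfrac23\Bigr].
\]

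Cases (iii) and (ii) run along the same lines with more patterns. In (iii) every extension induces $t\prec x\prec z$, and the position of $y$ among the four gaps of this chain gives $P_0=ytxz$, $P_1=tyxz$, $P_2=txyz$, $P_3=txzy$. Transposing $x$ and $y$ is a bijection $P_1\leftrightarrow P_2$; transposing $y$ and $z$ injects $P_3$ into $P_2$; transposing $y$ and $t$ injects $P_0$ into $P_1$. With $N(P_1)=N(P_2)=:c$ and $0\le N(P_0),N(P_3)\le c$,
\[
\mathbb{P}(x\prec y)=\frac{N(P_2)+N(P_3)}{N(P_0)+N(P_1)+N(P_2)+N(P_3)}\in\Bigl[\tfrac13,\tfrac23\Bigr].
\]
In (ii) the forced relations are $x\prec z$ and $y\prec t$, giving the patterns $xyzt,xytz,xzyt$ (with $x\prec y$) and $yxzt,yxtz,ytxz$ (with $y\prec x$). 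Transposing $x,y$ gives $N(xyzt)=N(yxzt)$ and $N(xytz)=N(yxtz)$; transposing $y,z$ injects $xzyt$ into $xyzt$; transposing $x,t$ injects $ytxz$ into $yxtz$. Hence $N(xzyt)\le N(xyzt)$ and $N(ytxz)\le N(yxtz)=N(xytz)$, and summing shows the count of $\{x\prec y\}$ is at most twice that of $\{y\prec x\}$ and vice versa, i.e.\ $\mathbb{P}(x\prec y)\in[\tfrac13,\tfrac23]$.

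I expect essentially all the work to sit in the legality verifications for the transpositions: for every pattern class one must confirm, from autonomy and the one‑sided containments above, that the elements caught between the two exchanged items are incomparable to both. These arguments are uniform but must be repeated, in the dual form for the downward swaps $y\leftrightarrow t$ of (iii) and $x\leftrightarrow t$ of (ii), where one uses $D(t)\subseteq D$ and $u>y\Rightarrow u>x$. The one conceptual point to respect is that most of these transpositions are used only as injections in a single direction—the reverse swap can genuinely fail when a $D(z)$‑ or $U(t)$‑type element falls between the exchanged elements—so the bookkeeping must combine the bijections and the one‑way injections correctly; once that is done, the displayed fractions confine $\mathbb{P}(x\prec y)$ to $[\tfrac13,\tfrac23]$ in each case.
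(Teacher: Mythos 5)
Your argument is correct, and it reaches the conclusion by a genuinely different organization than the paper's. The paper first proves a general inequality (Theorem \ref{imed2}): if $(y,z)$ is a critical pair and $Q=P\vee\{(y,z)\}$, then $\mathbb{P}_{Q}(x<y)<\mathbb{P}_{P}(x<y)\leq 2\mathbb{P}_{Q}(x<y)/(1+\mathbb{P}_{Q}(x<y))$; it then checks in each case that $z$ covers $x$ and $(y,z)$ is critical, and computes or bounds $\mathbb{P}_{Q}(x<y)$ --- equal to $\tfrac12$ in case (i) because both $(x,y)$ and $(y,x)$ are critical in $Q$, and lying in $[\tfrac13,\tfrac12)$ in cases (ii) and (iii) because $Q$ itself satisfies condition (i), so those cases are reduced recursively to the first. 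Your direct pattern enumeration is equivalent in content (in case (i) your $N(A)\le N(B)=N(C)$ is exactly the paper's $b\le a_1$ together with $a_1=a_2$, and the fraction $\tfrac{a+c}{a+2c}$ is the paper's $\tfrac{b+a_1}{b+a_1+a_2}$), but it avoids the detour through $Ext(P)$, the critical-pair machinery, and the reuse of case (i) inside $Q$, at the price of enumerating more patterns and verifying more swap legalities by hand in cases (ii) and (iii). What the paper's factorization buys is a reusable quantitative lemma and a two-line treatment of (ii) and (iii); what yours buys is a self-contained, elementary count. I checked the legality verifications you deferred (they all follow from autonomy plus the one-sided containments $U(z)\subseteq U(y)$ and $D(t)\subseteq D(x)\cap D(y)$, noting that an element trapped between the swapped pair can never be $z$ or $t$ because of the forced positions of those elements) and the resulting bounds $\tfrac{a+c}{a+2c}\in[\tfrac12,\tfrac23]$, $\tfrac{c+N(P_3)}{N(P_0)+2c+N(P_3)}\in[\tfrac13,\tfrac23]$, and $\tfrac{p+q+r}{2p+2q+r+s}\in[\tfrac13,\tfrac23]$; all go through. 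One harmless point to note in a full write-up: in case (ii) the hypotheses do not forbid $z\sim t$, so some of your six patterns may be empty, but since every inequality you use is of the form $0\le N(\cdot)\le N(\cdot)$ this does not affect the bounds.
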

We prove Theorem \ref{goodeqbalanced} in Section 3.

\begin{figure}[h]
\begin{center}
\includegraphics[width=250pt]{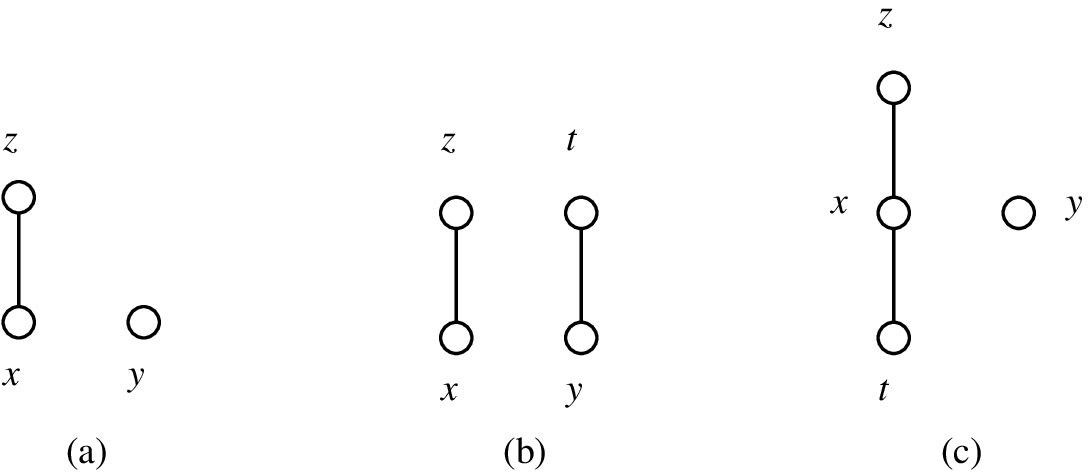}
\end{center}
\caption{}
\label{alpha}
\end{figure}

A \emph{semiorder} is an order which does not contain the orders depicted in Figure 1 (b) and 1 (c). Brightwell \cite{gb1} proved that every semiorder has a pair $(x,y)$ satisfying condition (i) of Theorem \ref{goodeqbalanced} and that either the pair $(x, y)$ is balanced, or $\mathbb{P}_{P}(x\prec z\prec y)>\frac{1}{3}$. Theorem \ref{goodeqbalanced} shows that the former always occurs. As a result we obtain this.

\begin{cor}\label{semiorder2} A balanced pair in a semiorder can be found in polynomial time.
\end{cor}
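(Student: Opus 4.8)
The plan is to combine the two theorems just stated with the computational content Brightwell established for semiorders. The key observation is that Corollary~\ref{semiorder2} is not asking for a purely existential statement but rather for an \emph{algorithmic} one: we must exhibit a balanced pair together with a procedure that locates it in polynomial time.

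First I would recall the structure of a semiorder. By definition a semiorder excludes the two configurations in Figure~\ref{alpha}(b) and (c). Brightwell's result, quoted in the excerpt, guarantees that every semiorder contains a pair $(x,y)$ satisfying condition~(i) of Theorem~\ref{goodeqbalanced}; that is, there is a witness $z$ with $x<z$, $x\nsim y\nsim z$, and $\{x,y\}$ autonomous in $P\setminus\{z\}$. Theorem~\ref{goodeqbalanced} then asserts that any such pair $(x,y)$ is automatically balanced in $P$. So the existence half is immediate: a semiorder has a balanced pair of the specified form.

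The remaining work is the complexity bookkeeping, and this is where I would spend most of the effort. I would argue that a pair $(x,y)$ witnessed by some $z$ as in condition~(i) can be \emph{found} in polynomial time by a direct search: iterate over all triples $(x,y,z)\in V^3$ and, for each, check in polynomial time whether $x<z$, whether $x\nsim y\nsim z$, and whether $\{x,y\}$ is autonomous in $P\setminus\{z\}$. The autonomy test reduces to verifying, for every $v\in V\setminus\{x,y,z\}$, the two implications in the defining equation~(1); this is $O(|V|)$ comparisons per candidate triple, so the whole search is $O(|V|^4)$, which is polynomial. Since Brightwell guarantees at least one such triple exists, the search succeeds. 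The same reasoning applies verbatim to the dual $P^{*}$, whose order relation is computable from that of $P$ in polynomial time, covering the ``or for its dual'' alternative in the hypothesis of Theorem~\ref{goodeqbalanced}.

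The main obstacle I anticipate is not the search itself but justifying that \emph{no evaluation of linear-extension counts} is needed along the way, since counting linear extensions is in general $\#P$-hard. The point to emphasize is that Theorem~\ref{goodeqbalanced} discharges the probabilistic inequality $1/3\le\mathbb{P}(x\prec y)\le 2/3$ purely from the combinatorial hypothesis~(i); once the search locates a triple satisfying~(i), balancedness is a theorem, not something we must recompute. Thus the algorithm never forms a linear extension or estimates $\mathbb{P}(x\prec y)$ numerically, and the polynomial bound stands. I would close by stating that combining the polynomial-time search with the balancedness guarantee of Theorem~\ref{goodeqbalanced} proves the corollary.
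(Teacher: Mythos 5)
Your proposal is correct and follows essentially the same route as the paper, which derives the corollary in one line from Brightwell's result that every semiorder has a pair satisfying condition~(i) of Theorem~\ref{goodeqbalanced} together with that theorem's guarantee that such a pair is balanced. Your only addition is to make the ``found in polynomial time'' claim concrete via an explicit $O(|V|^4)$ exhaustive search for the witnessing triple (where the paper implicitly relies on the effectiveness of Brightwell's construction), and your emphasis that no linear-extension counting is ever performed is exactly the point of the corollary.
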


The next definition describes a particular instance of a good pair.

\begin{definition}\label{def:02} Let $P$ be an ordered set. A pair $(a,b)$ of elements of $V$ is \emph{very good} if the following two conditions hold simultaneously in $P$ or in its dual.
\begin{enumerate}[(i)]
\item $D(a)= D(b)$; \underline{and}
\item $U(a)\setminus U(b)$ and $U(b)\setminus U(a)$ are chains (possibly empty).
\end{enumerate}
\end{definition}

For instance, the pairs $(x,y)$ and $(z,y)$ in example (a) Figure~\ref{alpha} are very good . So are the pairs $(x,y)$ and $(z,t)$ in example (b) Figure~\ref{alpha}. Also, the pairs $(t,y)$ and $(y,z)$ in example (c) Figure~\ref{alpha} are very good . Observe that every ordered set of width two has a very good pair. We have already mentioned that a semiorder which is not totally ordered has a very good pair. In \cite{za}, the author proved that every $N$-free ordered set which is not totally ordered has a very good pair. We now present another instance of a class of ordered sets that have a very good pair.

The \emph{cover graph} of an ordered set $P=(V,\leq)$ is the graph $Cov(P)=(V,E)$ such that $\{x,y\}\in E$ if and only if $x$ covers $y$ in $P$.

\begin{thm}\label{forest} Let $P$ be an ordered set not totally ordered
whose cover graph is a forest. Then $P$ has a very good pair, and hence has a balanced pair.
\end{thm}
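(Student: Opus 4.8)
The plan is to exhibit a very good pair directly, after which the theorem follows formally: a very good pair is a particular good pair (Definition \ref{def:02}), so Theorem \ref{goodisbalanced} supplies the promised balanced pair. First I would reduce to the connected case. Distinct connected components of $Cov(P)$ are pairwise incomparable in $P$, so for any element the sets $D(\cdot)$ and $U(\cdot)$ are computed entirely inside its own component; hence it suffices to find a very good pair in one component that is not a chain. If every component is a chain, then (as $P$ is not totally ordered) there are at least two of them, and two minimal elements $a,b$ lying in different components satisfy $D(a)=D(b)=\emptyset$ and $U(a)\setminus U(b)=U(a)$, $U(b)\setminus U(a)=U(b)$, which are chains; so $(a,b)$ is very good. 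Thus assume $Cov(P)=T$ is a tree and $P$ is not a chain. Replacing $P$ by $P^{*}$ if necessary, I may assume that some vertex has at least two upper covers: otherwise every vertex has at most one upper and at most one lower cover, forcing $T$ to be a monotone path, i.e. $P$ a chain.

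Next I would choose $v$ to be a vertex with at least two upper covers that is \emph{maximal} in $P$ among all such vertices, and write $p_{1},\dots,p_{m}$ ($m\ge 2$) for its upper covers. Two observations make condition (ii) of Definition \ref{def:02} automatic for any pair $p_{i},p_{j}$. First, every element $\ge p_{i}$ again has at most one upper cover, for such an element with two upper covers would sit strictly above $v$ and contradict the maximality of $v$; hence $U(p_{i})$ is a chain. Second, $p_{i}$ and $p_{j}$ can have no common upper bound: a common bound $x$ would, together with $v\lessdot p_{i}$ and $v\lessdot p_{j}$, yield two distinct $v$--$x$ paths in $T$ and hence a cycle. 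Therefore $U(p_{i})\cap U(p_{j})=\emptyset$, so $U(p_{i})\setminus U(p_{j})=U(p_{i})$ and $U(p_{j})\setminus U(p_{i})=U(p_{j})$ are chains. Consequently \emph{every} pair of upper covers of $v$ satisfies (ii), and the entire problem collapses to meeting condition (i): finding two upper covers with $D(p_{i})=D(p_{j})$.

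The matching of down-sets is the step I expect to be the main obstacle. A short tree argument (using that $T$ has no $4$-cycle) shows that for incomparable $a\ne b$ one has $D(a)=D(b)$ exactly when either both are minimal, or they share a single common lower cover which is the only lower cover of each. So I need two upper covers of $v$ each having $v$ as their unique lower cover. This can fail: an offending $p_{i}$ may possess a second lower cover $v'\neq v$, making $D(p_{i})$ strictly larger than $D(v)\cup\{v\}$. The tension is structural — the maximality of $v$ suppresses branching \emph{above} the $p_{i}$ but says nothing about branching \emph{below} them, and the two requirements of Definition \ref{def:02} pull in opposite directions.

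To resolve this I would not fix $v$ once and for all but make a simultaneous extremal choice, exploiting the freedom to work in $P$ or $P^{*}$. If two upper covers of $v$ already have $v$ as their sole lower cover, we are finished. Otherwise the offending upper cover carries genuine downward branching, and I would instead descend along such a branch to a \emph{deepest} branching vertex of $T$ (a vertex of degree at least three all but one of whose incident branches are pendant paths), which exists whenever $T$ is not a path; the residual path case makes $P$ a fence, in which two minimal elements sharing a unique upper cover are found by inspection. Along a pendant branch there is no interfering branching, so one can locate two leaves that are both minimal (or, after passing to $P^{*}$, both maximal) and whose private up-sets (resp.\ down-sets) are chains, yielding a very good pair. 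I expect the genuinely technical heart to be the bookkeeping that pins down such a pair with the correct, matching orientation while keeping one direction free of branching; this is where the combination of the deepest-branch choice with the dualization $P\leftrightarrow P^{*}$ is essential.
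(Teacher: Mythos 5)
Your setup is sound and, in fact, takes a different route from the paper (which works with a maximum-length fence rather than with branching vertices of the tree): the reduction to a connected tree is correct, the dualization to produce a vertex with two upper covers is correct, and the observation that for a vertex $v$ chosen maximal among those with at least two upper covers, any two upper covers $p_i,p_j$ automatically satisfy condition (ii) of Definition \ref{def:02} (disjoint up-sets, each a chain) is a genuinely nice and valid point. The difficulty you then identify --- that $D(p_i)=D(p_j)$ can fail because an upper cover of $v$ may branch downward --- is exactly the real obstacle.

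But the proposal does not overcome that obstacle; it only names it. The ``descend to a deepest branching vertex and take two pendant leaves of matching orientation'' step is not an argument, and as stated it fails on simple examples. Take a deepest branching vertex $u$ whose two pendant branches are a single upper cover $a$ and a single lower cover $b$: the two leaves are comparable ($b<u<a$), so no pair arises there. Or lengthen one branch to $u\lessdot a_1\gtrdot a_2$ and keep $u\gtrdot b$: now $a_2$ and $b$ are both minimal with equal (empty) down-sets, but $U(b)\supseteq\{u\}\cup U(u)$ need not be a chain, since $U(u)$ contains $a_1$ together with whatever lies above $u$ through the third branch. The up-sets and down-sets of pendant leaves propagate through the branching vertex into the rest of the tree, so ``no interfering branching along a pendant path'' does not make the relevant sets chains, and the two requirements of Definition \ref{def:02} (matching orientation with equal down-sets, versus chain up-sets) still pull against each other. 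Controlling this interaction is precisely the technical content of the paper's Lemma \ref{f0nochain}, Corollary \ref{minimal}, and the $\mathcal{F}$/$D_{i,j}$ case analysis; you have deferred it as ``bookkeeping,'' but it is the heart of the proof, and without it the argument is incomplete.
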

Section 4 is devoted to the proof of Theorem \ref{forest}.

We mention that an algorithm requiring $O(n^2)$ arithmetic operations for computing the number of linear extensions of an ordered set whose cover graph is a tree was given in \cite{atkinson}.

\section{Proof of Theorem \ref{goodisbalanced}}

We recall that an incomparable pair $(x,y)$ of elements is {\it critical} if $U(y)\subseteq
U(x)$ and $D(x)\subseteq~D(y)$. The set of critical pairs of $P$ is denoted by $crit(P)$.

\begin{lem} \label{l3} Suppose $(x,y)$ is a critical pair in $P$
and consider any linear extension of $P$ in which $y\prec x$. Then the
linear order obtained by swapping the positions of $y$ and $x$ is
also a linear extension of $P$. Moreover, $\mathbb{P}(x\prec y) \geq
\frac{1}{2}$.
\end{lem}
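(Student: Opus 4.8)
The plan is to prove the two assertions in turn, with the swapping statement doing all the real work and the probability bound following from a short counting argument.

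For the first assertion I would fix a linear extension $\preceq$ of $P$ with $y \prec x$ and write it as a sequence, so that $y$ and $x$ occupy two positions with some (possibly empty) block of elements $s_1 \prec \cdots \prec s_k$ strictly between them. Let $\preceq'$ denote the order obtained by exchanging the positions of $y$ and $x$, so that in $\preceq'$ we have $x \prec' s_1 \prec' \cdots \prec' s_k \prec' y$ while the relative order of every other pair is unchanged (any element before $y$ stays before both, and any element after $x$ stays after both). Hence the only comparabilities of $P$ that could be violated in passing from $\preceq$ to $\preceq'$ involve the reversed pairs $(s_i,x)$ and $(y,s_i)$, together with $(x,y)$. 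Since $x \nsim y$, the pair $(x,y)$ imposes no constraint. I would then invoke criticality to rule out the other two: if some $s_i$ satisfied $s_i < x$, then $s_i \in D(x) \subseteq D(y)$ would force $s_i < y$, hence $s_i \prec y$ in $\preceq$, contradicting $y \prec s_i$; and if $y < s_i$, then $s_i \in U(y) \subseteq U(x)$ would force $x < s_i$, hence $x \prec s_i$ in $\preceq$, contradicting $s_i \prec x$. Thus no comparability of $P$ is violated and $\preceq'$ is a linear extension.

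For the probability bound I would package the first assertion as a map. Let $\sigma$ be the involution on the set of all total orders of $V$ that exchanges the positions of $x$ and $y$. The first assertion says precisely that $\sigma$ carries each linear extension with $y \prec x$ to a linear extension with $x \prec y$. Since $\sigma$ is an involution it is injective, so the number of linear extensions with $y \prec x$ is at most the number with $x \prec y$; that is, $\mathbb{P}(y \prec x) \le \mathbb{P}(x \prec y)$. As the two probabilities sum to $1$, we conclude $\mathbb{P}(x \prec y) \ge \tfrac{1}{2}$.

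I expect the swapping step to be the only real obstacle, and within it the delicate point is to identify exactly which pairs change relative order — namely only those strictly between $y$ and $x$ — so that the two inclusions $D(x)\subseteq D(y)$ and $U(y)\subseteq U(x)$ can be applied cleanly. Once that bookkeeping is in place, both the validity of $\preceq'$ and the counting argument for the probability bound are immediate.
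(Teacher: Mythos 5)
Your proposal is correct and follows essentially the same route as the paper: show that every element lying strictly between $y$ and $x$ is neither below $x$ nor above $y$ (via $D(x)\subseteq D(y)$ and $U(y)\subseteq U(x)$), so the swap yields a linear extension, and then use the injectivity of the swap map to conclude $\mathbb{P}(y\prec x)\leq \mathbb{P}(x\prec y)$. No gaps; your bookkeeping of exactly which pairs reverse is, if anything, slightly more explicit than the paper's.
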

\begin{proof}Let $L$ be a linear extension that puts $y$ before $x$ and let
$z$ be such that $y\prec z\prec x$ in $L$. Then $z$ is incomparable with both $x$ and $y$ since $(x,y)$ is
a critical pair in $P$. Therefore, the linear order $L'$ obtained by swapping $x$
and $y$, that is $L'$ puts $x$ before $y$, is a linear extension of
$P$. Then map $L \mapsto L'$ from the set of linear extensions that
put $y$ before $x$ into the set of linear extensions that put $x$
before $y$ is clearly one-to-one. Hence, $\mathbb{P}(y\prec x) \leq \mathbb{P}(x\prec y)$
and therefore $\mathbb{P}(x\prec y) \geq \frac{1}{2}$.
\end{proof}

We now prove Theorem \ref{goodisbalanced}.

\begin{proof} We prove the theorem by contradiction. Let $P=(V,\leq)$ be an ordered set having a good pair $(a,b)$. We assume that $P$ has no balanced pair and we argue to a contradiction.

Then $U(b)\setminus U(a) \neq \varnothing$ because otherwise $(a,b)$ is a critical pair and hence $\mathbb{P}(a\prec b)\geq \frac{1}{2}$ (Lemma \ref{l3}). Since $(a,b)$ is a good pair $\mathbb{P}(a\prec b)\leq \frac{1}{2}$ and hence $\mathbb{P}(a\prec b)= \frac{1}{2}$ and therefore $(a,b)$ is balanced which is impossible by assumption.

Say $[U(b)\setminus U(a)]\cup \{b\}$ is the chain $b=b_1<\cdots <b_n$. Then
\[\mathbb{P}(a\prec b_1)<\frac{1}{3}.\]

Define now the following quantities
\begin{eqnarray*}
q_1 &=& \mathbb{P}(a\prec b_1),\\
q_j &=& \mathbb{P}(b_{j-1}\prec a \prec b_j)(2\leq j\leq n),\\
q_{n+1} &=& \mathbb{P}(b_n\prec a).
\end{eqnarray*}

\noindent\textbf{Lemma.}\cite{za} The real numbers $q_j$ ($1\leq j\leq n+1$) satisfy:
\begin{enumerate}[(i)]
\item $0\leq q_{n+1}\leq \cdots \leq q_1< \frac{1}{3},$
\item $\sum_{j=1}^{n+1}q_j=1.$
\end{enumerate}
\begin{proof}Since $q_1,\cdots,q_{n+1}$ is a probability distribution, all we have to show is that
$q_{n+1}\leq \cdots \leq q_1$. To show this we exhibit a one-to-one mapping from the event whose probability
is $q_{j+1}$ into the event with probability $q_j$ ($1\leq j\leq n$). Notice that in a linear extension for
which $b_j\prec a\prec b_{j+1}$ every element $z$ between $b_j$ and $a$ is incomparable to both $b_j$ and $a$. Indeed, such an element $z$ cannot be comparable to $b_j$ because otherwise $b_j<z$ in $P$ but the only
element above $b_j$ is $b_{j+1}$ which is above $a$ in the linear extension. Now $z$ cannot be comparable to $a$ as well because otherwise $z<a$ in $P$ and hence $z<b=b_1<b_j$ (by assumption we have that $D(a)\subseteq D(b)$).
The mapping from those linear extensions in which $b_j\prec a\prec b_{j+1}$ to those
in which $b_{j-1}\prec a\prec b_{j}$ is obtained by swapping the positions of $a$ and $b_j$.
This mapping clearly is well-defined and one-to-one. This completes the proof of the lemma.
\end{proof}

Theorem \ref{goodisbalanced} can be proved now: let $r$ be defined by
\[\sum_{j=1}^{r-1}q_j\leq \frac{1}{2}<\sum_{j=1}^{r}q_j\]
Since $\sum_{j=1}^{r-1}q_j=\mathbb{P}(a\prec b_{r-1})\leq \frac{1}{2}$, it follows that $\sum_{j=1}^{r-1}q_j<\frac{1}{3}$.
Similarly $\sum_{j=1}^{r}q_j=\mathbb{P}(a\prec b_{r})$ must be $>\frac{2}{3}$. Therefore $q_r>\frac{1}{3}$, but this contradicts $\frac{1}{3}>q_1\geq q_r$.
\end{proof}

\section{Proof of Theorem \ref{goodeqbalanced}}

Let $P=(V,\leq)$ be an ordered set. Denote by $Ext(P)$ the set of all \emph{extensions} of $P$ (or \emph{refinements} of the order defined on $P$), that is, all orders $\preceq$ on $V$ in which $x\preceq y$ whenever $x\leq y$ in $P$. Then $Ext(P)$ is itself ordered: for $Q,R\in
Ext(P), Q<R$ if $R$ itself is an extension of $Q$. For every pair
$(a,b)\in V\times V$, the \emph{transitive closure} of $P\cup
\{(a,b)\}$, denoted by $P\vee (a,b)$, is $P\cup \{(x,y) : x \leq a
\; {\rm and}\; b\leq y\; {\rm in}\; P\}$. As it is well-known, if
$(b,a)\not \in P$ then this is an order. It is shown in \cite{dk}
that if $Q$ and $R$ are elements of $Ext(P)$ then $R$ covers $Q$ in
$Ext(P)$ if and only if $R$ is obtained from $Q$ by adding the
comparability $a<b$ corresponding to a critical pair $(a,b)$ of $Q$.
In this case $R=Q\vee\{(a,b)\}=Q\cup \{(a,b)\}$. It turns out that the maximal elements of $Ext(P)$ are the linear extensions of $P$ \cite{szp}.

In order to prove Theorem \ref{goodeqbalanced} we will need the following general result.

\begin{thm}\label{imed2} Let $P$ be an ordered set and let
$x,y,z$ be three distinct elements such that $x<z$ and $y$ is
incomparable to both $x$ and $z$. Suppose that $(y,z)\in crit(P)$
and let $Q=P\vee\{(y,z)\}$. Then:
\begin{equation}\label{eq1}
{\mathbb{P}}_{Q}(x<y) < {\mathbb{P}}_{P}(x<y) \leq \frac{{2
\mathbb{P}}_{Q}(x<y)}{1+ {\mathbb{P}}_{Q}(x<y)}.
\end{equation}

\end{thm}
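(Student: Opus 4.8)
The plan is to classify the linear extensions of $P$ according to the relative order of $x$, $y$, $z$. Since $x<z$ holds in $P$, every linear extension places $x\prec z$; and since $y$ is incomparable to both $x$ and $z$, there are exactly three possibilities: $y\prec x\prec z$, $\ x\prec y\prec z$, or $x\prec z\prec y$. Write $a$, $b$, $c$ for the numbers of linear extensions of each of these three types, respectively, and write $e(\cdot)$ for the number of linear extensions. Because $(y,z)$ is critical, $Q=P\vee\{(y,z)\}$ is obtained simply by adding the relation $y<z$, so its linear extensions are precisely those of $P$ in which $y\prec z$; these are the extensions of the first two types. Hence $e(Q)=a+b$ while $e(P)=a+b+c$, and I would record
$$\mathbb{P}_{P}(x<y)=\frac{b+c}{a+b+c},\qquad \mathbb{P}_{Q}(x<y)=\frac{b}{a+b}.$$

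Next I would reduce both inequalities to statements about $a,b,c$. Clearing denominators, the left inequality $\mathbb{P}_{Q}(x<y)<\mathbb{P}_{P}(x<y)$ is equivalent to $ac>0$. For the right inequality, substituting $\mathbb{P}_{Q}(x<y)=b/(a+b)$ simplifies $\tfrac{2\mathbb{P}_{Q}(x<y)}{1+\mathbb{P}_{Q}(x<y)}$ to $\tfrac{2b}{a+2b}$, and then $\mathbb{P}_{P}(x<y)\le \tfrac{2b}{a+2b}$ reduces, after clearing denominators, to $a(c-b)\le 0$, i.e.\ to $c\le b$. So it suffices to prove $a>0$, $c>0$, and $c\le b$.

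The positivity of $a$ and $c$ is immediate from the incomparabilities. Since $x\nsim y$, the order $P\vee\{(y,x)\}$ (adding $y<x$) is well defined, and each of its linear extensions has $y\prec x\prec z$, so $a>0$; since $y\nsim z$, the order $P\vee\{(z,y)\}$ is well defined, and each of its linear extensions has $x\prec z\prec y$, so $c>0$. This already yields the strict left inequality.

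The heart of the argument is $c\le b$, which I would prove by exhibiting an injection from type-three to type-two extensions. Given a type-three extension $L$ (so $z\prec y$ in $L$), the crucial observation is that every element $w$ lying strictly between $z$ and $y$ in $L$ is incomparable to both: if $w<y$ then $w\in D(y)\subseteq D(z)$ forces $w\prec z$, and if $z<w$ then $w\in U(z)\subseteq U(y)$ forces $y\prec w$, each contradicting the position of $w$ (while $y<w$ or $w<z$ contradict it directly). Consequently I can exchange the two symbols $y$ and $z$ in the sequence $L$ and still obtain a linear extension of $P$, since the only reversed comparabilities are between $\{y,z\}$ and the elements lying between them, all of which are incomparable to both; the resulting extension is of type two. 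As exchanging $y$ and $z$ is an involution on permutations, this map is injective, giving $c\le b$. The main obstacle is exactly this structural step: verifying, from the critical-pair hypothesis $(y,z)\in crit(P)$, that nothing comparable to $y$ or $z$ can sit strictly between them, which is what makes the swap a legitimate linear extension.
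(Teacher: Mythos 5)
Your proposal is correct and follows essentially the same route as the paper: the same three-way classification of linear extensions by the relative position of $x,y,z$, the same algebraic reduction of the two inequalities, and the same swap-of-$y$-and-$z$ injection (which the paper delegates to its Lemma on critical pairs) to get $c\le b$. If anything, you are slightly more careful than the paper, which asserts the strict left inequality ``which is true'' without explicitly noting that it requires both $a>0$ and $c>0$ --- facts you correctly derive from the incomparabilities $x\nsim y$ and $y\nsim z$.
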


\vspace{.1in}

We should mention here that $\displaystyle \frac{{2
\mathbb{P}}_{Q}(x<y)}{1+ {\mathbb{P}}_{Q}(x<y)} \leq 1$ for every
$x,y$ and that $\displaystyle \frac{{2 \mathbb{P}}_{Q}(x<y)}{1+
{\mathbb{P}}_{Q}(x<y)} \leq \frac{2}{3}$ if and only if
$\displaystyle {\mathbb{P}}_{Q}(x<y) \leq \frac{1}{2}$. The second
inequality of (\ref{eq1}) above is tight as demonstrated by the
example (a) depicted in Figure \ref{alpha}. Moreover, if $(y,z)\not \in crit(P)$, then there exist $y'\leq y$ and
$z\leq z'$ such that $(y',z')\in crit(P)$. Obviously, $y'$ is incomparable to $x$ and $z'$.

\begin{proof} (Of Theorem \ref{imed2}) Denote by $\mathcal{L}(P)$ the set of linear extensions of $P$ and let $a_{1} = |\{L\in \mathcal{L}(P) :
x<_{L} y<_{L} z\}|$, $a_{2} = |\{L\in \mathcal{L}(P) : y<_{L} x\}|$
and $b = |\{L\in \mathcal{L}(P) : z<_{L} y\}|$. Then

$${\mathbb{P}}_{P}(x<y) =\frac{b+a_{1}}{b+a_{1}+a_{2}}\qquad {\rm and
}\qquad {\mathbb{P}}_{Q}(x<y) = \frac{a_{1}}{a_{1}+a_{2}}.$$

Proving the first inequality of Theorem \ref{imed2} amounts to
proving
$$ \frac{a_{1}}{a_{1}+a_{2}} < \frac{b+a_{1}}{b+a_{1}+a_{2}}.$$
which is true. Proving the second inequality amounts to proving that
$b\leq a_{1}$ since
$$\frac{{ \mathbb{P}}_{Q}(x<y)}{1+ {\mathbb{P}}_{Q}(x<y)} =
\frac{\frac{a_{1}}{a_{1}+a_{2}}}{1+\frac{a_{1}}{a_{1}+a_{2}}}=\frac{a_{1}}{2a_{1}+a_{2}}.$$
and
$$\frac{b+a_{1}}{b+a_{1}+a_{2}}\leq \frac{2a_{1}}{2a_{1}+a_{2}} \Leftrightarrow b\leq
a_{1}.$$

This last inequality is a consequence of Lemma \ref{l3}. Indeed,
there exists an injection from the set of linear extensions in which
$z<y$ (and $x<y$) to the set in which $y<z$ and $x<y$, obtained by
swapping the positions of $y$ and $z$ in the linear extension. It
follows that $b\leq a_{1}$.
\end{proof}

We now proceed to the proof of Theorem \ref{goodeqbalanced}.
\begin{proof} We consider the three cases separately.
\begin{enumerate}[(i)]
  \item Let $z\in V$ be such that $x<z$, $x\nsim y \nsim z$ and $\{x,y\}$ is autonomous in $P\setminus \{z\}$. Firstly, $z$ is an upper cover
of $x$. To prove this let $t$ be such that $x<t<z$. Then $t>y$ since $\{x,y\}$ is autonomous for $P\setminus \{z\}$. But then $y<z$, contradicting our assumption.

Secondly $(y,z)\in crit(P)$. To prove this let $u<y$. Then $u<x$ since $\{x,y\}$ is autonomous for $P\setminus \{z\}$. By
transitivity we get $u<z$. Now let $z<v$. Again by transitivity we have $x<v$. Hence, $y<v$ since $\{x,y\}$ is autonomous for
$P\setminus \{z\}$.

Consider $Q:=P\vee\{(y,z)\}$ and notice that $(x,y)$ and $(y,x)$ are critical in $Q$. It follows that ${\mathbb{P}}_{Q}(x<y)=\frac{1}{2}$. From Theorem \ref{imed2} we deduce that $(x,y)$ is balanced in $P$.
\item Let $z,t\in V$ be such that $x<z$, $y<t$, $y\nsim z$, $x\nsim t$ and $\{x,y\}$ is
autonomous for $P\setminus \{z,t\}$. Similar arguments as in $(i)$ yield that $z$ is an upper
cover of $x$, $t$ is an upper cover of $y$ and $\{(y,z),(x,t)\}\subseteq crit(P)$. Consider $Q:=P\vee\{(y,z)\}$ and
observe that $(y,x)\in crit(Q)$ and therefore ${\mathbb{P}}_{Q}(x<y)< \frac{1}{2}$ (Lemma \ref{l3}). Moreover,
$\{x,y\}$ is autonomous for $Q\setminus \{t\}$ which implies that $(x,y)$ is balanced in $Q$, and hence ${\mathbb{P}}_{Q}(x<y)\geq \frac{1}{3}$ (this is because $Q$ satisfies condition $(i)$ of Theorem \ref{goodeqbalanced}). Apply Theorem~\ref{imed2}.
\item Let $z,t\in V$ be such that $t<x<z$, $y$ is incomparable to both $t$ and $z$, and $\{x,y\}$ is autonomous for $P\setminus
\{z,t\}$. Similar arguments as in $(i)$ yield that $z$ is an upper cover of $x$, $t$ is a lower cover of $x$ and
$\{(t,y),(y,z)\}\subseteq crit(P)$. Consider $Q:=P\vee\{(y,z)\}$ and observe that $(y,x)\in crit(Q)$ and therefore
${\mathbb{P}}_{Q}(x<y)< \frac{1}{2}$. Moreover, $\{x,y\}$ is autonomous for $Q\setminus \{t\}$ which implies that $(x,y)$ is
balanced in $Q$ and hence ${\mathbb{P}}_{Q}(x<y)\geq \frac{1}{3}$. Apply Theorem \ref{imed2}.
\end{enumerate}
\end{proof}

\section{Proof of Theorem \ref{forest}}

Before getting to the proof of Theorem \ref{forest} we will need few definitions and preliminary results.

A \emph{fence} (of \emph{length} $n$) is any order isomorphic to the order defined on $\{f_0,..., f_n\}$, $n\geq 0$, where the elements with even subscript are minimal, the elements with odd subscript are maximal (or vice versa), and elements $f_i$ and $f_j$ are comparable if and only if $i=j$ or $|i-j|=1$.

A \emph{crown} (of \emph{length} $n$) is any order isomorphic to the order defined on $\{c_1,...,c_{2n}\}$, $n\geq 2$, where the elements with even subscript are minimal, the elements with odd subscript are maximal and elements $c_i$ and $c_j$ are comparable if and only if $i=j$ or $|i-j|=1$ or $i=1$ and $j=2n$.

A \emph{diamond} is any order isomorphic to the order defined on $\{d_1,d_2,d_3,d_4\}$ where $d_1<d_2<d_4$ and $d_1<d_3<d_4$ are the only cover relations among these elements.

The ordered set $P=(V,\leq)$ is \emph{crown-free}, if either $P$ has no subset isomorphic to a crown of length $n\geq 2$ or $P$ has a subset $\{c_1,c_2,c_3,c_4\}$ isomorphic to a crown of length 2 and there is an element $z\in V$ such that $c_2<z<c_1$ and $c_4<z<c_3$. We also say that $P$ is \emph{diamond-free} if there is no subset isomorphic to a diamond.

\begin{lem}\label{lem:forest2} Let $P=(V,\leq)$ be an ordered set which is crown-free and diamond-free. If $P$ contains a fence of length $n$, then $P$ contains a fence of length $n$ whose minimal elements are minimal in $P$ and whose maximal elements are maximal in $P$.
\end{lem}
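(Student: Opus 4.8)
The plan is to start from an arbitrary fence $f_0<f_1>f_2<\cdots$ of length $n$ and to transform it, one vertex at a time, into a fence of the same length all of whose minimal (resp.\ maximal) vertices are minimal (resp.\ maximal) in $P$. The engine is a single \emph{replacement step}: if a fence-minimal vertex $f_i$ is not minimal in $P$, choose a minimal element $m$ of $P$ with $m<f_i$ and replace $f_i$ by $m$; dually, working in $P^{*}$, a fence-maximal vertex that is not maximal in $P$ is pushed up to a maximal element of $P$ above it. Each step converts a non-extreme vertex into a $P$-extreme one and leaves the other vertices untouched, so the number of vertices still failing to be $P$-extreme strictly decreases; the process therefore terminates and yields the required fence.

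The whole argument rests on proving that one replacement step again produces a fence of length $n$. First I would check the routine facts: $m$ differs from every $f_j$, and $m<f_{i-1}$ together with $m<f_{i+1}$ are exactly the two comparabilities the new vertex must have. Consequently the only way the outcome can fail to be a fence is that $m$ is comparable to some $f_k$ with $|k-i|\ge 2$, and since $m$ is minimal this means $m<f_k$. I would then pick such a $k$ with $d:=|k-i|$ as small as possible and, reflecting the fence if necessary, assume $k=i+d$; minimality of $d$ then gives $m\nsim f_{i+2},\dots,f_{i+d-1}$.

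The crux is a short analysis of $d$, and this is exactly where the hypotheses enter. If $d=2$ then $f_i\nsim f_{i+2}$ while $m<f_i<f_{i+1}$ and $m<f_{i+2}<f_{i+1}$, so $\{m,f_i,f_{i+2},f_{i+1}\}$ induces a diamond, which is impossible. If $d\ge 4$ is even then $f_k$ is fence-minimal with $f_k<f_{k-1}$, forcing $m<f_{k-1}$ and contradicting the minimality of $d$. If $d\ge 5$ is odd then $f_{i+1}$ and $f_k$ are fence-maximal, $m$ lies below both and is incomparable to the interior vertices, so $\{m,f_{i+1},\dots,f_k\}$ induces a crown of length $(d+1)/2\ge 3$, which crown-freeness forbids.

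The delicate case, which I expect to be the main obstacle, is $d=3$: here $\{m,f_{i+2},f_{i+1},f_{i+3}\}$ is a crown of length $2$, and crown-freeness does not exclude it outright but instead supplies an element $z$ with $f_{i+2}<z<f_{i+1}$, $f_{i+2}<z<f_{i+3}$ and $m<z$. The point is to compare $z$ with $f_i$: the relation $f_i<z$ would yield $f_i<z<f_{i+3}$, and $z<f_i$ would yield $f_{i+2}<z<f_i$, each violating a fence-incomparability, so $f_i\nsim z$; but then $\{m,f_i,z,f_{i+1}\}$ induces a diamond, again a contradiction. Hence no offending $f_k$ exists, every replacement step is legitimate, and iterating it together with its dual proves the lemma. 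The only genuinely subtle point is the $d=3$ case, where one must notice that the element $z$ guaranteed by crown-freeness interacts with $f_i$ to produce the forbidden diamond; the remaining cases fall out once the minimality of $d$ is exploited.
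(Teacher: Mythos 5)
Your proposal is correct and takes essentially the same route as the paper: the paper's proof is exactly the single replacement step (swap a fence-minimal vertex that is not $P$-minimal for a $P$-minimal element below it, dually for maximal vertices), with the crucial incomparability claim dismissed in one sentence as following from crown- and diamond-freeness. Your case analysis on $d$, including the treatment of the filled length-$2$ crown via the element $z$, simply supplies the details the paper omits.
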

\begin{proof}Let $F:=\{f_0,..., f_n\}$, $n\geq 0$, be a fence of length $n$ and let $f_i$ be a minimal element of $F$. If $f_i$ is not minimal in $P$, then let $f<f_i$ be a minimal element in $P$. Since $P$ is crown-free and diamond-free, $f$ is incomparable to all elements of $F\setminus \{f_i\}$ except the upper cover(s) of $f_i$ in $F$. Hence $(F\setminus\{f_i\})\cup \{f\}$ is a fence of length $n$.
\end{proof}

\begin{lem}\label{lem:forest3}
 Let $P=(V,\leq)$ be an ordered set which is crown-free and diamond-free, $x\in V$, and let $F:=\{x=f_0, f_1,..., f_n\}$, $n\geq 2$, be a fence of maximum length among those fences starting at $x$ and assume that $f_n$ is minimal in $F$. Then
 \begin{enumerate}[(i)]
   \item $U(f_{n-2})\cap U(f_n)$ has a unique minimal element and this minimal element is less or equal to $f_{n-1}$.
   \item If $m_{n-2,n}$ is the unique minimal element of $U(f_{n-2})\cap U(f_n)$, then every element $f$ such that $f_n\leq f <m_{n-2,n}$ has a unique upper cover and this upper cover is comparable to $m_{n-2,n}$. In particular, every element larger or equal to $f$ is comparable to $m_{n-2,n}$.
 \end{enumerate}
\end{lem}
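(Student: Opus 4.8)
The plan is to handle the two parts in turn, drawing on crown-freeness for the uniqueness in (i) and on a combination of diamond-freeness and the maximality of the fence for (ii). Throughout write $m:=m_{n-2,n}$ and $S:=U(f_{n-2})\cap U(f_n)$. Since $f_n$ is minimal in $F$, the element $f_{n-1}$ is maximal in $F$ and lies above both $f_{n-2}$ and $f_n$; hence $f_{n-1}\in S$, so $S\neq\varnothing$.

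For (i), I would show that $S$ cannot have two distinct minimal elements. If $m_1\neq m_2$ were both minimal in $S$, then $m_1\nsim m_2$, while $f_{n-2}\nsim f_n$, and each of $f_{n-2},f_n$ lies below each of $m_1,m_2$; thus $\{f_{n-2},f_n,m_1,m_2\}$ is a crown of length $2$. Crown-freeness then forces a filling element $z$ with $f_{n-2}<z<m_1$ and $f_n<z<m_2$; but then $z\in S$ and $z<m_1$, contradicting the minimality of $m_1$. So $S$ has a unique minimal element $m$, and since a unique minimal element of a finite subset lies below every element of that subset, $m\leq f_{n-1}$, giving (i).

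For (ii), fix $f$ with $f_n\leq f<m$; I would reduce everything to the single subclaim that \emph{every upper cover $g$ of $f$ is comparable to $m$}. Granting this, uniqueness of the upper cover is immediate: two distinct upper covers $g_1,g_2$ are incomparable, and each must lie strictly below $m$ (an upper cover equal to or above $m$ is impossible since $f<m$ would then split the cover), so $\{f,g_1,g_2,m\}$ is a diamond, which is excluded. The surviving upper cover lies below $m$ because $f<m$, giving the asserted comparability. The ``in particular'' clause follows by a minimal-counterexample argument: an element $h\geq f$ with $h\nsim m$, chosen minimal, has a lower cover $h'$ with $f\leq h'<m$ (so $h'$ lies in the range $[f_n,m)$), and $h'$ then has two distinct upper covers, namely $h$ and a cover lying below $m$, contradicting the uniqueness just established.

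It remains to prove the subclaim, which I expect to be the crux. Assume $g$ is an upper cover of $f$ with $g\nsim m$. Using $S$ one checks $g\nsim f_{n-2}$ (if $g>f_{n-2}$ then $g\in S$, so $g\geq m$; if $g<f_{n-2}$ then $g<m$). Next $g>f_{n-1}$ gives $g>f_{n-2}$, and $g=f_{n-1}$ gives $m\leq f_{n-1}=g$, both impossible; and $g<f_{n-1}$ yields a diamond $\{f_n,g,m,f_{n-1}\}$ when $m<f_{n-1}$, or $g<m$ directly when $m=f_{n-1}$. Hence $g\nsim f_{n-2}$, $g\nsim f_{n-1}$, and $g>f_n$, and the aim is to conclude that $\{f_0,\dots,f_n,g\}$ is a fence of length $n+1$, contradicting the maximality of $F$. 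The one genuinely delicate point, and the main obstacle, is excluding comparabilities $g\sim f_i$ for $i\leq n-3$: taking the largest such $i$ produces a chordless cycle $f_i,f_{i+1},\dots,f_n,g$ in the comparability graph, which after a parity check is a crown of length $\geq 3$ when $n-i$ is even (hence forbidden), and collapses to an excluded comparability such as $g>f_{n-2}$ or $f_i>f_n$ when $n-i$ is odd. Once these distant comparabilities are ruled out, the fence extension goes through and contradicts the maximality of $F$, completing the subclaim and with it (ii).
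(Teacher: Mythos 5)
Your proposal is correct and follows essentially the same route as the paper: a crown of length $2$ on $\{f_{n-2},f_n,m_1,m_2\}$ for part (i), a fence-extension contradiction (ruling out comparabilities of the putative upper cover with $f_{n-2}$, $f_{n-1}$, and the distant $f_i$ via crowns and diamonds) for the key subclaim of (ii), diamond-freeness for uniqueness of the upper cover, and a largest/minimal-element argument along $[f,m)$ for the ``in particular'' clause. The only quibble is the parenthetical justification that an upper cover cannot \emph{equal} $m$ (the ``splitting'' argument only excludes covers strictly above $m$), but in the two-cover configuration the case $g_1=m$ is immediately excluded since $g_2<m=g_1$ would make two distinct upper covers comparable, so the argument is easily repaired.
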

\begin{proof}$(i)$ Suppose that $U(f_{n-2})\cap U(f_n)$ has two distinct minimal elements $y_1$ and $y_2$. Then $\{f_{n-2},f_n,y_1,y_2\}$ would be a crown in $P$. Say $m_{n-2,n}$ is the unique minimal element of $U(f_{n-2})\cap U(f_n)$. Then $m_{n-2,n}\leq f_{n-1}$ because otherwise $m_{n-2,n}\nsim f_{n-1}$ and hence $\{f_{n-2},f_n,m_{n-2,n},f_{n-1}\}$ would be a crown in $P$.\\
$(ii)$ Let $f$ be such that $f_n\leq f <m_{n-2,n}$ and $t$ be an upper of $f$. We assume that $t\nsim m_{n-2,n}$ and we will argue to a contradiction. We will prove that $F\,':=F\cup \{t\}$ is a fence. Then $F\,'$ is a fence that starts at $x$ and is of length larger than that of $F$ and this is a contradiction.  We start by proving that $t$ is incomparable to both $f_{n-2}$ and $f_{n-1}$. Indeed, if not, then $\{f_{n-2},f,t,m_{n-2,n}\}$ would be a crown in $P$ or $\{f,t,m_{n-2,n},f_{n-1},\}$ would be a diamond in $P$ which is not possible. Now suppose there exists $0\leq l \leq n-3$ such that $t\sim f_l$. Then $f_l <t$ (indeed by assumption $f_n<t$ and $f_n$ is incomparable to all elements of $\{x=f_0, f_1,..., f_{n-2}\}$ hence $t\nless f_l$). Choose $0\leq l \leq n-3$  maximal such that $f_l<t$. If $f_l$ is minimal in $F$, then the set $\{f_{l},...,f_{n},t\}$ is a crown in $P$. Else if $f_l$ is maximal in $F$, then the set $\{f_{l+1},...,f_{n},t\}$ is a crown in $P$. This is a contradiction. Hence we have proved that $t$ is comparable to $m_{n-2,n}$, that is $t\leq m_{n-2,n}$ (this is because $f<m_{n-2,n}$ and $t$ is an upper cover of $f$). From our assumption that $P$ is diamond-free we deduce that $\{u : f\leq u\leq m_{n-2,n}\}$ is a chain. It follows then that the set of upper covers of $f$ is a chain and therefore $f$ has a unique upper cover. Finally we prove that if $t'\geq f$, then $t'\sim m_{n-2,n}$. If $m_{n-2,n}\leq t'$, there is nothing to prove. Next we suppose that $m_{n-2,n}\nleq t'$. Let $f'$ be the largest element verifying $f_n\leq f'<m_{n-2,n}$ and $f'<t'$. It follows from our previous discussion that $f'$ has a unique upper cover and that this upper cover is comparable to $m_{n-2,n}$. Hence, $t'<m_{n-2,n}$ and we are done. This completes the proof of the lemma
\end{proof}

The following corollary gives a characterization of ordered sets whose cover graph is a forest

\begin{cor}\label{lem:forest1} Let $P=(V,\leq)$ be an ordered set. The cover graph of $P$ is a forest if and only if $P$ is crown-free and diamond-free.
\end{cor}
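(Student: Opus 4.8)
The plan is to reduce the corollary to a single combinatorial equivalence: the cover graph $Cov(P)$ contains a cycle if and only if $P$ contains either a diamond or a crown of length $n\ge 2$ that is not of the exceptional (filled) length-$2$ type permitted by the definition of crown-freeness. Since a graph is a forest exactly when it has no cycle, this equivalence is literally the statement of the corollary. First I would record the elementary observation that $Cov(P)$ has no triangle: if three vertices induced a $3$-cycle of cover edges, then comparing the two relations meeting at any one vertex forces, by transitivity, a relation that destroys one of the covers; hence every cycle of $Cov(P)$ has length at least $4$.

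For the implication ``$P$ has a diamond or an essential crown $\Rightarrow$ $Cov(P)$ has a cycle'', the diamond case is clean: given $d_1<d_2<d_4$, $d_1<d_3<d_4$ with $d_2\nsim d_3$, I would pick a saturated chain from $d_1$ to $d_4$ through $d_2$ and another through $d_3$. These are two walks between the same endpoints, and they are genuinely distinct simple paths because one meets $d_2$ and the other $d_3$ while $d_2\nsim d_3$; two distinct paths with common endpoints force a cycle. For a crown $c_1,\dots,c_{2n}$ I would replace each crown comparability by a saturated chain, obtaining a connected subgraph $H\subseteq Cov(P)$, assume $H$ is a forest, and run a meet/valley analysis: the two chains rising from a minimal $c_{2i}$ to its two maximal neighbours diverge at a well-defined vertex $a_{2i}$ with $c_{2i}\le a_{2i}<c_{2i-1},c_{2i+1}$, so the unique tree-path between consecutive maximal elements is a valley through $a_{2i}$. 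For $n=2$, uniqueness of the $c_1$--$c_3$ path forces the valleys through $c_2$ and through $c_4$ to coincide at one vertex $z$, and a short argument gives $c_2<z<c_1$ and $c_4<z<c_3$; that is, the crown is exactly the filled length-$2$ crown that is \emph{allowed}. Hence an \emph{essential} crown cannot have $H$ a forest, producing the desired cycle, and for $n\ge 3$ the same valley analysis pushed around the whole crown yields a non-retracing closed walk in $H$, again a cycle.

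For the converse ``$Cov(P)$ has a cycle $\Rightarrow$ $P$ has a diamond or an essential crown'', I would take a shortest cycle $C=v_0v_1\cdots v_{k-1}$, so that $k\ge 4$ and $C$ is chordless in $Cov(P)$. Classifying each vertex by its two cycle-neighbours as a local maximum, a local minimum, or a slope vertex, the extreme vertices alternate maximum/minimum around $C$ and are joined by monotone runs of cover edges. Collapsing each monotone run to its endpoints produces extrema $M_1,N_1,\dots,M_m,N_m$ with $N_i<M_i$ and $N_i<M_{i+1}$ cyclically. If $m=1$ the cycle consists of two internally disjoint saturated chains between a single minimum and a single maximum, from which a diamond is extracted; if $m\ge 2$ these extrema form a crown of length $m$, and one verifies it is essential, since any filling element would be comparable to all four relevant extrema and would therefore supply a chord or a strictly shorter cycle, contradicting the minimality of $C$.

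The main obstacle, and the part needing the most care, is precisely this passage between cover-graph cycles and order-theoretic crowns and diamonds. The saturated chains chosen to realise crown comparabilities may overlap, and the slope vertices of a shortest cycle must be handled so that the collapsed extrema genuinely induce a crown with no extra comparabilities and with all elements distinct. Controlling these overlaps, together with the boundary bookkeeping that isolates exactly the filled length-$2$ crown as the unique crown compatible with $Cov(P)$ being a forest, is where the real work lies; by contrast, the diamond half of each implication is immediate from the ``two distinct paths force a cycle'' principle.
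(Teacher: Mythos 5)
Your overall strategy is genuinely different from the paper's: the paper proves the hard direction (crown-free and diamond-free implies forest) by induction on $|V|$, using a maximum-length fence together with part $(ii)$ of Lemma \ref{lem:forest3} to locate a minimal element with a unique upper cover, i.e.\ a leaf of the cover graph, which is then deleted. Your plan is instead a direct graph-theoretic equivalence between cycles of $Cov(P)$ and diamonds or essential crowns. The forward half (a diamond or an essential crown forces a cycle) is workable: the diamond case is complete, the valley analysis correctly isolates the filled length-$2$ crown as the only crown compatible with a tree, and the case $n\geq 3$ can be finished by a parity-of-edge-traversals argument in the tree, although you have not written that part out.

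The genuine gap is in the converse, precisely at the step you yourself flag as ``where the real work lies'': the extrema of a shortest cycle need not induce a crown, and minimality of the cycle does not exclude extra comparabilities. An extra comparability $N_i<M_j$ is realized by a saturated chain whose edges are covers between non-cycle vertices, so it is not a chord, and it need not produce a strictly shorter cycle. Concretely, start from the crown of length $3$ with minima $N_1,N_2,N_3$ and maxima $M_1,M_2,M_3$, whose cover graph is the $6$-cycle $N_1M_1N_2M_2N_3M_3$, and adjoin two new elements with $N_1<x_1<x_2<M_2$, each step a cover and $x_1,x_2$ incomparable to everything else. All nine cover relations survive, the girth remains $6$, and $N_1M_1N_2M_2N_3M_3$ is still a shortest, chordless cycle; but its six extrema now satisfy $N_1<M_2$, so they do not induce a crown, and your verification (``a filling element would supply a chord or a shorter cycle'') never engages, since the obstruction is an extra comparability rather than a filling element. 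The poset does contain the essential length-$2$ crown $\{M_1,N_1,M_2,N_2\}$, so the corollary itself is safe; it is your choice of witness that breaks. To repair the argument you would need either a more careful selection of the cycle or an explicit proof that any extra comparability among the extrema itself yields a diamond or a smaller essential crown --- neither of which is supplied, so as written the proposal is a plan rather than a proof at its critical step.
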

\begin{proof}Clearly, if the cover graph of $P$ is a forest, then $P$ is crown-free and diamond-free. For the converse assume $P$ is crown-free and diamond-free
and let $F=\{f_0,..., f_n\}$, $n\geq 0$, be a fence of maximum length in $P$. It follows from Lemma \ref{lem:forest2} that we can assume that the minimal elements of $F$ are minimal in $P$ and the maximal elements of $F$ are maximal in $P$. By duality we may assume without loss of generality that $f_n$ is minimal in $P$. We claim that $f_n$ has a unique upper cover. If $n\leq 1$, then $P$ is a disjoint sum of chains and we are done. Else if $n\geq2$, then our claim follows from $(ii)$ of Lemma \ref{lem:forest3} with $f=f_n$. Now consider the ordered set $P\setminus \{f_n\}$. From our assumption that $P$ is crown-free and diamond-free it follows that $P\setminus \{f_n\}$ is also crown-free and diamond-free. An induction argument on the number of elements of $P$ shows that the cover graph of $P$ is a forest.
\end{proof}

\begin{lem}\label{f0nochain} Let $P=(V,\leq)$ be an ordered set which is not a chain and whose cover graph is a tree, $x\in V$, and let $F:=\{x=f_0, f_1,..., f_n\}$, $n\geq 2$, be a fence of maximum length among those fences starting at $x$ and assume that $f_n$ is minimal in $F$. If $U(f_n)$ is not a chain, then either $P$ has very good pair in $U(f_n)$ or there exists a fence $F\,':=(F\setminus\{f_{n-1},f_n\})\cup \{f'_{n-1},f'_n\}$ such that $f_{n-2}<f'_{n-1}>f'_n$, $f'_n$ is minimal in $P$ and $U(f'_n)$ is a chain.
\end{lem}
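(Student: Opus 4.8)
The plan is to analyze the branching structure of the filter $U(f_n)$ and split into two cases according to whether a suitable pair of upper covers has equal down-sets. First I would record the structure forced by maximality of $F$. Since the cover graph of $P$ is a tree, $P$ is crown-free and diamond-free (Corollary \ref{lem:forest1}), so Lemma \ref{lem:forest3} applies. Because $f_{n-1}$ lies in $U(f_{n-2})\cap U(f_n)$ and covers $f_n$, part (i) forces $m_{n-2,n}=f_{n-1}$, and part (ii) with $f=f_n$ then shows $f_{n-1}$ is the unique upper cover of $f_n$. Hence $U(f_n)=\{f_{n-1}\}\cup U(f_{n-1})$ is a filter with least element $f_{n-1}$ whose induced cover graph is a subtree, and the hypothesis that $U(f_n)$ is not a chain says exactly that this subtree branches.

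Next I would locate a \emph{maximal} branching point $g$ of $U(f_n)$, that is, an element with at least two upper covers and with no branching point above it (such $g$ exists because an incomparable pair in $U(f_n)$ produces a branching point at the bottom of the unique tree-path joining them, and the poset is finite). For any upper cover $u$ of $g$, maximality of $g$ forces $U(u)$ to be a chain (two incomparable elements above $u$ would yield a branching point above $g$), and distinct upper covers of $g$ have disjoint filters (a common upper bound would close a cycle in the tree). Writing $u_1,u_2,\dots$ for the upper covers of $g$, each satisfies $D(u_i)\supseteq D(g)\cup\{g\}$, with equality precisely when $g$ is the unique lower cover of $u_i$.

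The dichotomy is then as follows. If at least two of the $u_i$, say $u_1,u_2$, have $g$ as unique lower cover, then $D(u_1)=D(u_2)=D(g)\cup\{g\}$ while $U(u_1)\setminus U(u_2)=U(u_1)$ and $U(u_2)\setminus U(u_1)=U(u_2)$ are chains; thus $(u_1,u_2)$ is a very good pair (Definition \ref{def:02}) lying in $U(f_n)$, giving the first alternative. Otherwise some upper cover $u_1$ of $g$ has a lower cover $v\neq g$. A unique-path argument in the tree shows $v$ is incomparable to every element of $F$ (the path from $v$ to any $f_i$ must pass over the peak $u_1$), while $u_1>g\ge f_{n-1}>f_{n-2}$ and $u_1$ is incomparable to $f_0,\dots,f_{n-3}$. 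Taking $f'_n$ to be any minimal element of $P$ below $v$ and $f'_{n-1}:=u_1$, I would verify that $\hat F:=\{f_0,\dots,f_{n-2},u_1,f'_n\}$ is again a fence of length $n$, hence of maximum length, with $f'_n$ minimal in $P$; this is the desired $F'$.

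The hard part is checking that this new endpoint satisfies that $U(f'_n)$ is a chain, and here the key move is to feed the reconstructed fence $\hat F$ back into Lemma \ref{lem:forest3}. Because $v$, and indeed every element strictly between $f'_n$ and $u_1$, is incomparable to $f_{n-2}$, one shows the unique minimal element of $U(f_{n-2})\cap U(f'_n)$ is exactly $u_1$; part (ii) applied to $\hat F$ then makes every element of the interval $[f'_n,u_1)$ have a unique upper cover, so $U(f'_n)$ is the chain running up to $u_1$ followed by the chain $U(u_1)$. The main obstacle is precisely this reconfiguration step: arranging that $f'_n$ is simultaneously minimal in $P$ and has a chain filter while $\hat F$ remains a genuine maximum-length fence, which forces one to re-invoke the structural Lemma \ref{lem:forest3} on the modified fence rather than on $F$ itself.
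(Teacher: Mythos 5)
Your proof follows essentially the same route as the paper: reduce $U(f_n)$ to a chain feeding into the branching part of the filter, locate a topmost place where branching occurs, and then split into the dichotomy ``two chain-branches with equal down-sets give a very good pair'' versus ``an extra lower cover $v$ lets you reroute the fence through $u_1$ and $v$, after which Lemma \ref{lem:forest3} applied to the \emph{new} fence shows $U(f'_n)$ is a chain.'' The paper organizes this around the set $T$ of elements of $U(m_{n-2,n})$ having a lower cover incomparable to $m_{n-2,n}$ and a maximal $y\in T$ (your $u_1$ and $v$ play the roles of its $y$ and $z$), and it extracts the very good pair from maximal elements rather than from two upper covers of your branch point $g$; these are cosmetic differences.

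One claim in your opening paragraph is wrong as stated: $f_{n-1}$ need not cover $f_n$ in $P$ (the fence is only an induced subposet), so Lemma \ref{lem:forest3}(i) gives only $m_{n-2,n}\leq f_{n-1}$, not $m_{n-2,n}=f_{n-1}$, and $f_n$'s unique upper cover need not be $f_{n-1}$. Consequently your later inequality $g\geq f_{n-1}$ is unjustified ($g$ could lie in a branch of $U(m_{n-2,n})$ incomparable to $f_{n-1}$). This does not sink the argument, because everything you actually use follows from $g\geq m_{n-2,n}>f_{n-2},f_n$ together with $v\nsim m_{n-2,n}$ (which holds, by a diamond argument on $\{m_{n-2,n},v,g,u_1\}$); but the write-up should be corrected to work with $m_{n-2,n}$ rather than $f_{n-1}$, and the incomparability checks for the new fence (your ``path over the peak'' remark) should be carried out via the crown arguments, exactly as in the paper's Claims 3--6.
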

\begin{proof} We recall that $U(f_{n-2})\cap U(f_n)$ has a unique minimal element $m_{n-2,n}$ and $m_{n-2,n}\leq f_{n-1}$ ($(i)$ of Lemma \ref{lem:forest3}).\\
\textbf{Claim 1:} $\{t : f_n< t \mbox{ and } t\nsim m_{n-2,n}\}=\varnothing$.\\
\emph{Proof of Claim 1:} Follows from $(ii)$ of Lemma \ref{lem:forest3}.\\
\textbf{Claim 2:} $U(m_{n-2,n})$ is a chain if and only if $U(f_n)$ is a chain.\\
\emph{Proof of Claim 2:} Obviously, if $U(f_n)$ is a chain, then $U(m_{n-2,n})$ is also a chain. Now suppose that $U(m_{n-2,n})$ is a chain. From Claim 1 we deduce that in order to prove $U(f_n)$ is a chain it is enough to prove that the set $\{x : f_n\leq t\leq m_{n-2,n}\}$ is a chain. This is true since $P$ is diamond-free. This completes the proof of claim 2.\\

Suppose that $U(f_n)$ is not a chain. It follows from Claim 2 that $U(m_{n-2,n})$ is not a chain. Since $P$ is diamond-free $U(m_{n-2,n})$ has at least two maximal elements (in $P$) and every element of $U(m_{n-2,n})$ has a unique lower cover comparable to $m_{n-2,n}$. Set
\[T:=\{y\in U(m_{n-2,n}) : y \mbox{ has a lower cover } z \mbox{ such that } z\nsim m_{n-2,n}\}.\]
If $T=\varnothing$, then the lower covers of every element $y\in U(m_{n-2,n})$ are comparable to $m_{n-2,n}$. Hence every element $y\in U(m_{n-2,n})$ has a unique lower cover. It follows then that any two distinct maximal elements $a$ and $b$ of $U(m_{n-2,n})$ verify $D(a)\setminus D(b)$ and $D(b)\setminus D(a)$ are chains and therefore the pair $(a,b)$ is a very good pair and we are done. Else if $T\neq \varnothing$, then let $y$ be a maximal element of $T$. It follows that the lower covers of every element of $U(y)$ must be comparable to $y$. Furthermore, and since $P$ is diamond-free, every element of $U(y)$ has a unique lower cover. Now assume that $U(y)$ is not a chain. Then $U(y)$ has at least two maximal elements (this is because $P$ is diamond-free).  Clearly any two such elements of $U(y)$ form a very good pair and we are done.

For the remainder of the proof of the lemma we assume that $U(y)$ is a chain. Let $z$ be a lower cover of $y$ such that $z\nsim m_{n-2,n}$. In particular $z\not \in \{f_{n-2},f_n\}$.\\
\textbf{Claim 3:} For all $z'\leq z$, $z'$ is incomparable to all elements of $\{m_{n-2,n}\} \cup D(m_{n-2,n})$.\\ 
\emph{Proof of Claim 3:} Suppose there exists $u\in \{m_{n-2,n}\} \cup D(m_{n-2,n})$ and $u\sim z'$. If $z'<u$, then it follows from our assumption $z\nsim m_{n-2,n}$ that $z\neq z'$ and hence $\{z',m_{n-2,n},z,y\}$ is a diamond in $P$. Else if $u<z'$, then it follows from our assumption $z'\leq z$ and $z\nsim m_{n-2,n}$ that $u\neq m_{n-2,n}$ and hence $\{u,m_{n-2,n},z,y\}$ is a diamond in $P$. In both cases we obtain a contradiction. This completes the proof of Claim 3.\\
\textbf{Claim 4:} For all $z'\leq z$, $F\,':=\{x=f_0, f_1,..., f_{n-2},y,z'\}$, $n\geq 2$, is a fence of maximum length among those fences starting at $x$.\\
\emph{Proof of Claim 4:} From our assumption that $F$ is fence follows that $F\setminus \{f_{n-1},f_n\}$ is fence. Hence in order to prove Claim 4 all we have to prove is that $y$ is incomparable to all elements of $F\,'\setminus \{f_{n-2},y,z'\}$ and $z'$ is incomparable to all elements of $F\,'\setminus \{y,z'\}$. From our assumption that $P$ is crown-free and diamond-free follows easily that $y$ is incomparable to all elements of $F\,'\setminus \{f_{n-2},y,z'\}$. We now prove that $z'$ is incomparable to all elements of $F'\setminus \{y,z'\}$. Suppose there exists $0\leq l \leq n-2$ such that $z'\sim f_l$. Then $l\neq n-2$ (follows from Claim 3) and $z'<f_l$ (this is because $z'<y$ and $y$ is incomparable to all elements of $\{x=f_0, f_1,..., f_{n-2}\}$ and hence $f_l \nless z'$). Choose $0\leq l \leq n-3$  maximal such that $z'<f_l$. If $f_l$ is minimal in $F$, then the set $\{z',f_{l+1},...,f_{n-2},y\}$ is a crown in $P$. Else if $f_l$ is maximal in $F$, then the set $\{z',f_{l},...,f_{n-2},y\}$ is a crown in $P$. This is a contradiction. The proof of Claim 4 is now complete.\\
\textbf{Claim 5:} Let $t$ be such that $m_{n-2,n}\leq t<y$ and let $z'\leq z$. Then $t\nsim z'$.\\
\emph{Proof of Claim 5:} Suppose not. Then $z'<t$ (this is because $z'\leq z$ and $z\nsim m_{n-2,n}$) and hence $z'\neq z$ (this is because $z$ is a lower cover of $y$ and $t<y$). It follows then that $\{z',t,z,y\}$ is a diamond in $P$ which is impossible. This completes the proof of Claim~5.\\
\textbf{Claim 6:} For every $z'\leq z$, if $t> z'$, then $t$ is comparable to $y$.\\
\emph{Proof of Claim 6:} It follows from Claim 4 that $F\,':=\{x=f_0, f_1,..., f_{n-2},y,z'\}$, $n\geq 2$, is a fence of maximum length among those fences starting at $x$. It follows from $(i)$ of Lemma \ref{lem:forest3} applied to $F'$ that the smallest element of $U(z')\cap U(f_{n-2})$ must be less or equal to $y$. Claims 3 and 5 imply that $y$ is the smallest element of $U(z')\cap U(f_{n-2})$.  Applying $(ii)$ of Lemma \ref{lem:forest3} to $F\,'$ with $f=z'$ gives the required conclusion. The proof of Claim 6 is now complete.\\

Let $z'\leq z$ and $t\geq z'$. From Claim 6 we deduce that $t\sim y$. Since $P$ is diamond-free $\{t : z'\leq t\leq y\}$ must be a chain. It follows from our assumption $U(y)$ is a chain that $U(z')$ is a chain. It follows from Claim 4 that 
$F\,'=\{x=f_0, f_1,...,f_{n-2},y,z'\}=(F\setminus\{f_{n-1},f_n\})\cup \{y,z'\}$ is a fence of maximum length among those fences starting at $x$. Choosing $z'$ to be minimal in $P$ it becomes now apparent that the fence $F\,'$ satisfies the required conditions of the lemma and we are done.
\end{proof}

\begin{cor}\label{minimal} Let $P=(V,\leq)$ be an ordered set which is not a chain and whose cover graph is a tree and let $F:=\{f_0, f_1,..., f_n\}$, $n\geq 2$, be a fence of maximum length in $P$. If $f_0$ and $f_n$ are minimal elements in $P$, then $P$ has a very good pair.
\end{cor}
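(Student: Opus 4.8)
The plan is to use the two endpoints $f_0$ and $f_n$ themselves as the very good pair, after first arranging that their up-sets are chains. Since $f_0$ and $f_n$ are distinct minimal elements of $P$, they are incomparable and satisfy $D(f_0)=D(f_n)=\varnothing$, so condition (i) of Definition \ref{def:02} holds for the pair $(f_0,f_n)$ automatically. Moreover, if both $U(f_0)$ and $U(f_n)$ happen to be chains, then $U(f_0)\setminus U(f_n)$ and $U(f_n)\setminus U(f_0)$ are chains as well (each being a subset of a chain), so condition (ii) holds and $(f_0,f_n)$ is very good. Thus the entire burden of the proof is to reduce to the situation in which $U(f_0)$ and $U(f_n)$ are both chains.

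First I would handle the endpoint $f_n$. A fence of maximum length in $P$ is in particular of maximum length among the fences starting at $f_0$, and $f_n$ is minimal in $F$; hence Lemma \ref{f0nochain} applies. If $U(f_n)$ is not a chain, the lemma offers a dichotomy: either $P$ already has a very good pair (inside $U(f_n)$), in which case we are done, or there is a fence $F\,'=(F\setminus\{f_{n-1},f_n\})\cup\{f'_{n-1},f'_n\}$ with $f'_n$ minimal in $P$ and $U(f'_n)$ a chain. Since $F\,'$ has the same cardinality as $F$ it is again of maximum length, and $f_0$ is untouched; after renaming $f'_n$ as $f_n$ we may therefore assume $U(f_n)$ is a chain while keeping $f_0$ minimal and $F$ of maximum length. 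If $U(f_n)$ was already a chain, no modification is needed.

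Next I would repeat this at the other end, applying Lemma \ref{f0nochain} to the reversed fence $\{f_n,f_{n-1},\dots,f_0\}$, which is of maximum length among the fences starting at $f_n$, with $f_0$ minimal in it. If $U(f_0)$ is not a chain, we again either extract a very good pair outright or replace the last two elements $f_1,f_0$ by new elements $f'_1,f'_0$ with $f'_0$ minimal and $U(f'_0)$ a chain. The key point is that this modification is local to the $f_0$ end: the starting element $f_n$ of the reversed fence is preserved, so the chain property of $U(f_n)$ established in the previous step survives. After renaming we reach a maximum-length fence both of whose endpoints are minimal in $P$ with up-sets chains, and the pair of endpoints is very good by the observation in the first paragraph.

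The step I expect to require the most care is this locality verification, especially in the extremal case $n=2$, where the two invocations of Lemma \ref{f0nochain} act on overlapping portions of the fence; one must check that replacing the middle element and the near endpoint does not alter the far endpoint. This holds because Lemma \ref{f0nochain} always fixes the initial element of the fence it is applied to and only rewrites the terminal two elements, so the two reductions do not interfere. Everything else---the equality of the (empty) down-sets and the passage from ``$U$ is a chain'' to ``the set differences are chains''---is immediate.
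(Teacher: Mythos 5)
Your proof is correct and follows essentially the same route as the paper: reduce, via Lemma \ref{f0nochain} applied at each end of the fence, to the situation where both endpoints are minimal in $P$ with chain up-sets, and then take the two (distinct, hence incomparable) minimal endpoints as the very good pair. The only divergence is that the paper disposes of $n=2$ by a separate direct argument (using Claim 1 of the proof of Lemma \ref{f0nochain} and symmetry to show $(f_0,f_2)$ is already very good without modifying the fence), whereas you run the two-ended reduction uniformly; your locality observation---that each application of the lemma fixes its initial element and only rewrites the terminal two, and that ``$U(f'_n)$ is a chain'' is an intrinsic property of the element $f'_n$ unaffected by the second rewrite---correctly justifies that.
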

\begin{proof} We notice at once that $F$ is a fence of maximum length among those fences that start at $f_0$, respectively that start at $f_n$. Hence, if $f_0$ or $f_n$ is a minimal element in $P$, and hence minimal in $F$, then Lemma \ref{f0nochain} applies. Assume that $f_0$ and $f_n$ are minimal elements in $P$. If $n=2$, then it follows from Claim 1 of the proof of Lemma \ref{f0nochain} and symmetry that $\{x : f_2< x \mbox{ and } x\nsim m_{0,2}\}=\varnothing$ where $m_{0,2}$ is the unique minimal element of $U(f_0)\cap U(f_2)$. Hence, $U(f_0)\setminus U(f_2)$ and $U(f_2)\setminus U(f_1)$ are chains proving that $(f_0,f_2)$ is a very good pair and we are done. Now assume $n\geq 4$. If $U(f_0)$ and $U(f_n)$ are chains, then $(f_0,f_n)$ is a very good pair and we are done. Suppose $U(f_n)$ is not a chain. Applying Lemma \ref{f0nochain} to the fence $F$ with $x=f_0$ we deduce that either $P$ has a very good pair in $U(f_n)$ or there exists a fence $F\,':=(F\setminus\{f_{n-1},f_n\})\cup \{f'_{n-1},f'_n\}$ (of maximum length) such that $f'_n$ is minimal in $P$, $f_{n-2}<f'_{n-1}>f'_n$ and $U(f'_n)$ is a chain. If $U(f_0)$ is a chain, then the pair $(f_0,f'_n)$ is a very good pair and we are done. Else if $U(f_0)$ is not a chain, then applying Lemma \ref{f0nochain} to the fence $F'$ with $x=f'_n$ we deduce that either $P$ has a very good pair in $U(f_0)$ or there exists a fence
$F\,'':=(F\, '\setminus\{f_{0},f_1\})\cup \{f'_{0},f'_1\}$ (of maximum length) such that $f'_0$ is minimal in $P$, $f'_{0}<f'_{1}>f_2$  and $U(f'_0)$ is a chain. It follows then that the pair $(f'_0,f'_n)$ is a very good pair and we are done.
\end{proof}
We now proceed to the proof of Theorem \ref{forest}.
\begin{proof}
Let $P=(V,\leq)$ be an ordered set not totally ordered and whose cover graph is a forest. If all connected components of $P$ are chains, then any two distinct minimal elements of $P$ form a very good pair. Otherwise $P$ has a connected component which is not a chain. Clearly, a very good pair in this connected component remains very good in $P$. Hence, we lose no generality by assuming that $P$ is connected, that is, its cover graph is tree.

Let $F:=\{f_0,f_1,...,f_n\}$, $n\geq2$, be a fence of maximum length in $P$. It follows from Lemma \ref{lem:forest2} that we may assume that all the $f_i$'s are minimal or maximal in $P$ and by duality we may assume without loss of generality that $f_0$ is a minimal element in $P$. It follows from Lemma \ref{f0nochain} that we can assume $U(f_0)$ to be a chain. By duality and symmetry it then follows that we can assume that either $D(f_n)$ is a chain if $f_n$ is maximal or $U(f_n)$ is a chain if $f_n$ is minimal. It follows from Corollary \ref{minimal} that we can assume $f_n$ to be maximal (hence $n$ is odd). We now define
\[\mathcal{F}:=\{x : \mbox{ there exist } 1\leq i,j\leq n \mbox{ with } |i-j|=1 \mbox{ such that } f_i\leq x \leq f_{j}\}\]
and
\[D:=\{x\in \mathcal{F} : \mbox{ there exists a fence } F_x \mbox{ of length at least 2 starting at } x \mbox{ so that } \mathcal{F}\cap F_x=\{x\}\}.\]
We consider two cases.\\
\noindent \textbf{Case 1:} $D\neq \varnothing$.\\
Let $x\in D$ and let $F_x=\{x=e_0,e_1,...,e_k\}$, $k\geq 2$, be a fence of maximum length at least 2 (among those fences starting at $x$ and satisfying $\mathcal{F}\cap F_x=\{x\}$). We notice at once that $f_0 \nsim e_k\nsim ,f_n$ (this follows easily from our assumption that $P$ is crown-free and diamond-free). Assume that $e_k$ is minimal in $F_x$. It follows from Lemma \ref{f0nochain} applied to $P$ and the fence $F_x$ that if $U(e_k)$ is not a chain, then either $P$ has a very good pair or we can find a new fence $F\,'_x=\{e_0=x,...,e'_{k-1},e'_k\}$ such that $e'_k$ is minimal in $P$ and $U(e'_k)$ is a chain. If the former holds then we are done. Else if the latter holds, then it follows from $f_0$ is minimal in $P$ and  $e_0\nsim f_0$ that $e'_k\nsim f_0$. Hence, $(f_0,e'_k)$ is a very good pair. If $e_k$ is maximal in $F_x$, then it follows from Lemma \ref{f0nochain} applied to the dual of $P$ and to the dual of the fence $F_x$ that either $P$ has a very good pair or we can find a new fence $F\, ''_x=\{e_0=x,...,e''_{k-1},e''_k\}$, $k\geq 2$, such that $e''_k$ is maximal in $P$ and $D(e''_k)$ is a chain. If the former holds then we are done. Else if the latter holds, then it follows from $f_n$ is maximal in $P$ and  $e_k\nsim f_n$ that $e''_k\nsim f_n$. Hence $(f_n,e''_k)$ is a very good pair.\\
\textbf{Case 2:} $D=\varnothing$.\\
\textbf{Claim 1:} Let $x\in \mathcal{F}$. Then every element of $U(x)\setminus \mathcal{F}$ has a unique lower cover and this lower cover is comparable to $x$. Dually, every element of $D(x)\setminus \mathcal{F}$ has a unique upper cover and this upper cover is comparable to $x$.\\
\emph{Proof of Claim 1:} Suppose there exists $y\in U(x)\setminus \mathcal{F}$ that has two distinct lower covers $y_1$ and $y_2$ and note that $y_1\nsim y_2$. Then $y_1$ or $y_2$ is incomparable to $x$ because otherwise $x\not \in \{y_1,y_2\}$ and therefore $\{x,y_1,y_2,y\}$ is a diamond in $P$ which is not possible. Say $y_1$ is incomparable to $x$. Then $y_1\in \mathcal{F}$ because otherwise $\{x,y,y_1\}$ is a fence of length at least 2 starting at $x$ and verifying $\mathcal{F} \cap \{x,y,y_1\}=\{x\}$ contradicting $D=\varnothing$. Let $k',k$ be nonnegative integers such that $f_{k'}\leq y_1\leq f_{k}$ and $|k'-k|=1$. Since $x\in \mathcal{F}$ there are nonnegative integers $i$ and $j$ such that $f_i\leq x\leq f_j$ and $|i-j|=1$. If $y_1$ is comparable to $f_i$, that is $k'=i$, then $y_1\neq f_i$ (this is because $y_1$ is a lower cover of $y$ and $f_i<x<y$) and since $f_i$ is minimal in $P$ we have $f_i<y_1$. Hence, $\{f_i,x,y_1,y\}$ is a diamond in $P$. Else if $y_1$ is incomparable to $f_i$, then $\{y_1,f_k,...,f_i,y\}$ is a crown. In both cases we obtain a contradiction since $P$ is diamond-free and crown-free. This proves Claim 1.\\
\textbf{Claim 2:} If there exists $x\in \mathcal{F}$ such that $U(x)\setminus \mathcal{F}$ or $D(x)\setminus \mathcal{F}$ is not a chain, then $P$ has a very good pair.\\
\emph{Proof of Claim 2:} Let $x\in \mathcal{F}$ be such that $U(x)\setminus \mathcal{F}$ is not a chain. Since $P$ is diamond-free $(U(x)\setminus \mathcal{F})$ has at least two maximal elements and every element of $(U(x)\setminus \mathcal{F})$ has a unique lower cover comparable to $x$. It follows from Claim 1 of Case 2 that every element of $U(x)\setminus \mathcal{F}$ has a unique lower cover and that this lower cover is comparable to $x$. It becomes now apparent that any pair of distinct maximal elements of $U(x)\setminus \mathcal{F}$ is a very good pair and we are done.\\

It follows from Claim 2 that we can assume that for every element $x\in \mathcal{F}$ the sets $U(x)\setminus \mathcal{F}$ and $D(x)\setminus \mathcal{F}$ are chains.

For integers $0\leq i,j\leq n$ with $|i-j|=1$ and $i$~even,~set
\[ D_{i,j}:=\{x : f_i < x < f_{j} \mbox{ and there exists } t\not \in \mathcal{F} \mbox{ such that } t \mbox{ covers } x\}.\]
\noindent\textbf{Claim 3:} If $D_{0,1}\neq \varnothing$, then $P$ has a very good pair.\\
\emph{Proof of Claim 3:} Assume that $D_{0,1}\neq \varnothing$ and let $x$ be such that $f_0 < x < f_{1}$ and let $t\not \in \mathcal{F}$ be a cover of $x$. From $U(f_0)$ is a chain it follows that $t<x$ and hence $t$ is a lower cover of $x$. We claim that $t\nsim f_0$. If not, then it follows from our assumption that $f_0$ is minimal in $P$ that $f_0<t$. From $t\not \in \mathcal{F}$ it follows that $x$ is not an upper of $f_0$. Let $x'\in \mathcal{F}$ be an upper cover of $f_0$. But then the set $\{f_0,x',t,x\}$ is a diamond in $P$. A contradiction. Our claim is then proved. Now let $t'\leq t$ be a minimal element. It follows from Claim 1 of Case 2 and our assumption $U(x)\setminus \mathcal{F}$ is a chain that $U(t')$ is a chain. Hence the pair $(f_0,t')$ is a very good pair and we are done.\\

For the remainder of the proof we assume that $D_{0,1}=\varnothing$.\\

\noindent\textbf{Claim 4:} If $D_{2,1}\neq \varnothing$, then $P$ has a very good pair.\\
\emph{Proof of Claim 4:} We recall that $U(f_{0})\cap U(f_1)$ has a unique minimal element denoted $m_{0,2}$ and that $f_0<m_{0,2}\leq f_{1}$. Let $x\in D_{2,1}$ and notice that since $D_{0,1}=\varnothing$ we have $f_2< x< m_{0,2}$. Choose $x$ to be maximal in $D_{2,1}$. We argue on whether $x$ is a lower cover of $m_{0,2}$ or not. We first consider the case $x$ is a lower cover of $m_{0,2}$. Let $t$ be a cover of $x$ not in $\mathcal{F}$. Suppose $t$ is a lower cover of $x$ and let $t'\leq t$ be a minimal element in $P$. We claim that $(f_0,t')$ is a very good pair. Indeed, by assumption $U(f_0)$ is a chain and hence $U(f_0)\setminus U(t')$ is a chain. Moreover, it follows from the maximality of $x$ and Claim 1 of Case 2 that $U(t')\setminus U(f_0)$ is also a chain. Since $f_0$ and $t'$ are both minimal in $P$ our claim follows. Now suppose that $t$ is an upper cover of $x$ and let $t''\geq t$ be a maximal element in $P$. We claim that $(f_1,t'')$ is a very good pair. Indeed, $D(t'')\setminus D(f_1)=\{z: t\leq z<t''\}$ which is a chain (this follows from Claim 1 of Case 2 and our assumption that $D(x)\setminus \mathcal{F}$ is a chain). Moreover, $D(f_1) \setminus D(t'')=\{z: f_0\leq z<f_1\}$ which is also a chain (by assumption $D_{0,1}=\varnothing$). The required conclusion follows since $f_1$ and $t''$ are maximal in $P$. Now we consider the case $x$ is not a lower cover of $m_{2,1}$. From our choice of $x$ it follows that for all $u$ such that $x<u<m_{2,1}$ we have $u\not \in D_{2,1}$, that is, every cover of $u$ is in $\mathcal{F}$. From our assumption that $U(f_0)$ is a chain follows that $U(u)$ is a chain. Let $u$ be an upper cover of $x$ such that $x<u<m_{2,1}$. Then $D(u)=D(t)=\{x\}\cup D(x)$. Hence $(u,t)$ is a very good pair and we are done.

For the remainder of the proof we assume that $D_{2,1}=\varnothing$. \\
%

Now it becomes apparent that similar arguments as in the proof of Claim 4 lead to $P$ has a very good pair if $D_{2,3}\neq \varnothing$. Hence we may assume that $D_{2,3}= \varnothing$. Let $y_1$ and $y_2$ be two distinct lower covers of $m_{0,2}$ such that $f_0\leq y_1<m_{0,2}$  and $f_2\leq y_2<m_{0,2}$. We claim that $(y_1,y_2)$ is a very good pair if $y_2\neq f_2$, or $(f_0,f_1)$ is a very good pair if $y_2= f_2$. Indeed, $D(y_1)$ is a chain since $D_{0,1}=\varnothing$ and $D(y_2)$ is a chain since $D_{2,1}=\varnothing$ and $U(y_1)U(m_{0,2})\cup \{m_{0,2}\}$ is a chain since $U(f_0)$ is a chain. Moreover, if $y_2\neq f_2$, then $U(y_2)=U(m_{0,2})\cup \{m_{0,2}\}$ which is a chain, else if $y_2= f_2$, then $f_2$ is a lower cover of $m_{0,2}$ and $U(f_2)\setminus U(f_1)$ is a chain since by assumption $D_{2,3}= \varnothing$. This proves our claim and completes the proof of the theorem.
\end{proof}

\noindent{\bf Acknowledgement}\ :
The author thanks two anonymous referees for their careful reading of the manuscript and for their remarks and suggestions.

\bibliographystyle{amsplain}

\begin{thebibliography}{}

\bibitem{atkinson}M. D. Atkinson, {\em On computing the number of linear extensions of a tree,} Order {\bf 7} (1990), 23--25.


\bibitem{gb1} G. Brightwell, {\em Semiorders and the $1/3-2/3$ conjecture,}
Order {\bf 5} (1989), 369--380.

\bibitem{gb} G. Brightwell, {\em Balanced pairs in Partial orders,}
Discrete Mathematics {\bf 201} (1999), 25--52.


\bibitem{gb2} G. Brightwell and C. D. Wright, {\em The $1/3-2/3$ conjecture for 5-thin posets,}
SIAM. J. Discrete Mathematics {\bf 5} (1992), 467--474.

\bibitem{dk} R. A. Dean and G. Keller, {\em Natural partial orders.} Canad. J. Math. {\bf 20} (1968), 535--554.

\bibitem{ghp} B. Ganter, G. Hafner and W. Poguntke, {\em On linear extensions of ordered sets with a symmetry,}
Special issue: ordered sets (Oberwolfach, 1985). Discrete Math. {\bf 63} (1987), 153--156.


\bibitem{ki} S. S. Kislitsyn, {\em Finite partially ordered sets and their associated set of permutations.}
Matematicheskiye Zametki. {\bf 4} (1968), 511--518.

\bibitem{li} N. Linial, {\em The information theoretic bound is good for merging.} SIAM J. Comput. {\bf 13} (1984), 795--801.

\bibitem{pm} M. Peczarski, {\em The gold partition conjecture for 6-thin posets.} Order {\bf 25} (2008), 91--103.

\bibitem{szp} E. Szpilrajn, {\em Sur l'extension de l'ordre partiel,} Fund. Math., {\bf 16} (1930), 386--389.

\bibitem{tgf} W. T. Trotter, W. G. Gehrlein and P. C. Fishburn, {\em Balance theorems for height-$2$ posets}.
Order \textbf{9} (1992), 43--53.

\bibitem{za} I. Zaguia, {\em The $1/3-2/3$ Conjecture for $N$-free ordered sets.} Electronic Journal of Combinatorics \textbf{19} (2012), P\#29.



\end{thebibliography}

\end{document}